\newtheorem{theorem}{Theorem}[section]
\newtheorem{definition}[theorem]{Definition}
\newtheorem{proposition}[theorem]{Proposition}
\newtheorem{lemma}[theorem]{Lemma}
\newtheorem{claim}[theorem]{Claim}
\newtheorem{notation}[theorem]{Notation}
\newtheorem{observation}[theorem]{Observation}
\renewenvironment{proof}[1][Proof]{ \noindent \textbf{#1: }}{$\Box$
\bigskip}
\begin{document}

\title{A Tight Quantitative Version of Arrow's Impossibility Theorem}

\author{
Nathan Keller\thanks{The author was partially supported by the
Adams Fellowship Program of the Israeli Academy of Sciences and
Humanities and by
the Koshland Center for Basic Research.}\\
Faculty of Mathematics and Computer Science\\
Weizmann Institute of Science\\
P.O. Box~26, Rehovot 76100, Israel\\
{\tt nathan.keller@weizmann.ac.il}\\
}

\maketitle

\begin{abstract}
The well-known Impossibility Theorem of Arrow asserts that any
Generalized Social Welfare Function (GSWF) with at least three
alternatives, which satisfies Independence of Irrelevant
Alternatives (IIA) and Unanimity and is not a dictatorship, is
necessarily non-transitive. In 2002, Kalai asked whether one can
obtain the following quantitative version of the theorem: For any
$\epsilon>0$, there exists $\delta=\delta(\epsilon)$ such that if
a GSWF on three alternatives satisfies the IIA condition and its
probability of non-transitive outcome is at most $\delta$, then
the GSWF is at most $\epsilon$-far from being a dictatorship or
from breaching the Unanimity condition. In 2009, Mossel proved
such quantitative version, with
$\delta(\epsilon)=\exp(-C/\epsilon^{21})$, and generalized it to
GSWFs with $k$ alternatives, for all $k \geq 3$.

In this paper we show that the quantitative version holds with
$\delta(\epsilon)=C \cdot \epsilon^3$, and that this result is
tight up to logarithmic factors. Furthermore, our result (like
Mossel's) generalizes to GSWFs with $k$ alternatives. Our proof is
based on the works of Kalai and Mossel, but uses also an
additional ingredient: a combination of the Bonami-Beckner
hypercontractive inequality with a reverse hypercontractive
inequality due to Borell, applied to find simultaneously upper
bounds and lower bounds on the ``noise correlation'' between
Boolean functions on the discrete cube.

\end{abstract}

\section{Introduction}
\label{sec:Introduction}

Consider an election procedure in which a society of $n$ members
selects a ranking amongst $k$ alternatives. In the voting process,
each member of the society gives a ranking of the alternatives
(the ranking is a full linear ordering; that is, indifference
between alternatives is not allowed). The set of the rankings
given by the individual members is called a {\it profile}. Given
the profile, the ranking of the society is determined according to
some function, called a {\it generalized social welfare function}
(GSWF).

The GSWF is a function $F:(S_k)^n \rightarrow
\{0,1\}^{{{k}\choose{2}}}$, where $S_k$ is the set of linear
orderings on $k$ elements. In other words, given the profile
consisting of linear orderings supplied by the voters, the
function determines the preference of the society amongst each of
the ${{k}\choose{2}}$ pairs of alternatives. If the output of $F$
can be represented as a full linear ordering of the $k$
alternatives, then $F$ is called a {\it social welfare function}
(SWF).

Throughout this paper we consider GSWFs satisfying the {\it
Independence of Irrelevant Alternatives} (IIA) condition: For any
pair of alternatives $A$ and $B$, the preference of the entire
society between $A$ and $B$ depends only on the preference of each
individual voter between $A$ and $B$. This natural condition on
GSWFs can be traced back to Condorcet~\cite{Condorcet}.

The Condorcet's paradox demonstrates that if the number of
alternatives is at least three and the GSWF is based on the
majority rule amongst every pair of alternatives, then there exist
profiles for which the voting procedure cannot yield a full order
relation. That is, there exist alternatives $A,B,$ and $C$, such
that the majority of the society prefers $A$ over $B$, the
majority prefers $B$ over $C$, and the majority prefers $C$ over
$A$. Such situation is called {\it non-transitive outcome} of the
election.

In his well-known Impossibility theorem~\cite{Arrow}, Arrow showed
that such paradox occurs for any ``reasonable'' GSWF satisfying
the IIA condition:
\begin{theorem}[Arrow]
Consider a generalized social welfare function $F$ with at least
three alternatives. If the following conditions are satisfied:
\begin{itemize}
\item The IIA condition,

\item Unanimity --- if all the members of the society prefer some
alternative $A$ over another alternative $B$, then $A$ is
preferred over $B$ in the outcome of $F$,

\item $F$ is not a dictatorship (that is, the preference of the
society is not determined by a single member),
\end{itemize}
then the probability of a non-transitive outcome is positive
(i.e., there necessarily exists a profile for which the outcome is
non-transitive).
\end{theorem}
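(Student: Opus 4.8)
The plan is to establish the contrapositive in the strong form: if a GSWF $F$ on $k \ge 3$ alternatives satisfies IIA and Unanimity and its outcome is transitive on \emph{every} profile, then $F$ is a dictatorship. The organizing notion is that of a decisive coalition. By IIA, for each ordered pair $(a,b)$ of alternatives the social preference between $a$ and $b$ is a function only of the individual preferences between $a$ and $b$; call a coalition $G \subseteq [n]$ \emph{decisive} for $(a,b)$ if society ranks $a$ above $b$ whenever every member of $G$ does (no matter how the others vote), and \emph{weakly decisive} for $(a,b)$ if this is guaranteed only under the extra assumption that every voter outside $G$ ranks $b$ above $a$. Unanimity says precisely that the grand coalition $[n]$ is decisive for every ordered pair, so the family of decisive coalitions is nonempty; the goal is to shrink it to a singleton.

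The first main step is a \emph{field-expansion} (contagion) lemma: if $G$ is weakly decisive for one ordered pair $(a,b)$, then $G$ is decisive for \emph{every} ordered pair. Using a third alternative $c$ (available since $k \ge 3$), one builds a profile in which $G$ ranks $a \succ b \succ c$ while every voter outside $G$ ranks $b$ first (with $a,c$ below in either order); weak decisiveness forces $a$ above $b$ in the outcome, Unanimity forces $b$ above $c$, and transitivity forces $a$ above $c$. Since this argument never constrained the outside voters' preference between $a$ and $c$, IIA upgrades the conclusion to ``$G$ is decisive for $(a,c)$,'' and iterating such moves (permuting the roles of the alternatives, and chaining through one intermediate alternative at a time, which works because the alternatives form a complete graph) propagates full decisiveness to all $\binom{k}{2}$ ordered pairs.

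The second main step is a \emph{group-contraction} lemma: if $G$ is decisive (for all pairs) and $|G| \ge 2$, then some nonempty proper subset of $G$ is weakly decisive for some ordered pair, hence, by the field-expansion lemma, decisive for all pairs. Partition $G = G_1 \sqcup G_2$ with both parts nonempty, fix alternatives $a,b,c$, and consider the profile in which $G_1$ ranks $a \succ b \succ c$, $G_2$ ranks $c \succ a \succ b$, and everyone outside $G$ ranks $b \succ c \succ a$. All of $G$ prefers $a$ to $b$, so decisiveness of $G$ places $a$ above $b$ in the outcome; now inspect the outcome on $\{a,c\}$. If $a$ is above $c$, then $G_1$ is exactly the set of voters preferring $a$ to $c$, so by IIA (as in Step 1) $G_1$ is weakly decisive for $(a,c)$; if instead $c$ is above $a$, transitivity combined with $a$ above $b$ puts $c$ above $b$, and $G_2$ is exactly the set of voters preferring $c$ to $b$, so $G_2$ is weakly decisive for $(c,b)$. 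Starting from the decisive grand coalition $[n]$ and applying this lemma repeatedly yields decisive coalitions of strictly decreasing size, terminating in a singleton $\{i\}$ decisive for every ordered pair. But a singleton $\{i\}$ being decisive for $(x,y)$ means society ranks $x$ above $y$ whenever voter $i$ does; applying this to both $(x,y)$ and $(y,x)$ shows that on every pair the social preference copies voter $i$'s preference, i.e.\ $F$ is the dictatorship of voter $i$ --- contradicting the hypothesis and completing the plan.

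I expect the combinatorial bookkeeping in Steps 1 and 2 to be the main obstacle: the profiles must be chosen so that each voter's stated preference is a genuine linear order, so that Unanimity is applicable exactly where it is invoked, and so that transitivity of the outcome pins down the \emph{new} social preference; one must then identify correctly which sub-coalition is the unique set of voters on the favorable side of the decisive pair, and verify that the sub-coalition produced in Step 2 is genuinely \emph{proper} (hence the partition must be nontrivial and both cases of the dichotomy must be treated). The remaining ingredients --- the reduction of ``weakly decisive on one observed profile'' to ``weakly decisive'' via IIA, and the induction on coalition size --- are routine once the two lemmas are in hand.
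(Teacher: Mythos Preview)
Your proposal is a correct, classical ``decisive-coalitions'' proof of Arrow's theorem (field-expansion followed by group-contraction, \`a la Sen); the outline is sound and the obstacles you flag are the right ones.

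However, the paper does not give its own proof of this statement at all: Arrow's theorem is stated in the introduction as a classical result, with a citation to Arrow's original paper~\cite{Arrow} and a remark that several short proofs exist (citing~\cite{Geneakoplos}). The paper's actual contribution is the \emph{quantitative} version (Theorems~\ref{Thm:Main1} and~\ref{Thm:Main2}), from which the qualitative Arrow theorem can be recovered as the degenerate case $P(F)=0$ together with Mossel's characterization of $\mathcal{F}_k(n)$ (Theorem~\ref{Thm:Mossel3}) and the observation that Unanimity rules out the non-dictatorial members of $\mathcal{F}_k(n)$; but the paper does not spell out this derivation either. So there is nothing in the paper to compare your argument against --- your combinatorial route is simply a different (and self-contained) proof of the background theorem, whereas the paper's machinery is analytic and aimed at the stronger quantitative statement.
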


Since the existence of profiles leading to a non-transitive
outcome has significant implications on voting procedures, an
extensive research has been conducted in order to evaluate the
probability of non-transitive outcome for various GSWFs. Most of
the results in this area are summarized in~\cite{Gehrlein}. In
addition to its significance in Social Choice theory, this area of
research leads to interesting questions in probabilistic and
extremal combinatorics (see~\cite{Maj-Stablest}).

In 2002, Kalai~\cite{Kalai-Choice} suggested an analytic approach
to this study. He showed that for GSWFs on three alternatives
satisfying the IIA condition, the probability of a non-transitive
outcome with respect to a uniform distribution of the individual
preferences can be computed by a formula related to the
Fourier-Walsh expansion of the GSWF. Using this formula he
presented a new proof of Arrow's impossibility theorem under
additional assumption of neutrality (i.e., invariance of the GSWF
under permutation of the alternatives), and established upper
bounds on the probability of non-transitive outcome for specific
classes of GSWFs.

While providing an analytic proof to Arrow's theorem does not seem
such an important goal (as there are several simple proofs of it,
see~\cite{Geneakoplos}), Kalai aimed at establishing a {\it
quantitative} version of the theorem. Such version would show that
for any $\epsilon>0$, there exists $\delta=\delta(\epsilon)$ such
that if a GSWF on three alternatives satisfies the IIA condition
and its probability of non-transitive outcome is at most $\delta$,
then the GSWF is at most $\epsilon$-far from being a dictatorship
or from breaching the Unanimity condition. Kalai indeed proved
such statement for neutral GSWFs on three alternatives, with
$\delta(\epsilon)=C \cdot \epsilon$ for a universal constant $C$.

Kalai~\cite{Kalai-Private} asked whether his techniques can be
extended to general GSWFs, and suggested to use the Bonami-Beckner
hypercontractive inequality~\cite{Bonamie,Beckner} in order to get
such an extension. However, Keller~\cite{Keller-Choice} showed by
an example that a direct extension cannot hold -- if there exists
$\delta(\epsilon)$ as above, then it cannot depend linearly on
$\epsilon$. Keller asked whether for general GSWFs on three
alternatives, a quantitative version holds with
$\delta(\epsilon)=C \cdot \epsilon^2$.

A few months ago Mossel~\cite{Mossel-Arrow} succeeded to prove a
quantitative version of Arrow's theorem for general GSWFs on three
alternatives. Furthermore, he generalized his result to GSWFs on
more than three alternatives, and to more general probability
distributions on the individual preferences. Unlike Kalai's
techniques, Mossel's proof is quite complex. While Kalai's proof
uses only simple analytic tools but no combinatorial tools,
Mossel's proof extends and exploits a combinatorial proof of
Arrow's theorem given by Barbera~\cite{Barbera}. Furthermore, it
uses ``heavier'' analytic tools, including a reverse
hypercontractive inequality of Borell~\cite{Borell} and a
non-linear invariance principle introduced by Mossel et
al.~\cite{Maj-Stablest}. The only drawback in Mossel's result is
the dependence of $\delta$ on $\epsilon$:
$\delta(\epsilon)=\exp(-C/\epsilon^{21})$ for a universal constant
$C$, which seems far from being optimal. Mossel conjectured that
the ``correct'' dependence of $\delta$ on $\epsilon$ should be
polynomial.\footnote{We note that Mossel also obtained another
variant of his theorem, in which the dependence of $\delta$ on
$\epsilon$ is $\delta(\epsilon)=C \epsilon^3 n^{-3}$, where $n$ is
the number of voters, and $C$ is a ``decent'' constant. As follows
from our results presented below, this variant is essentially
tight for very small values of $\epsilon$ (dependent on $n$).
Moreover, for certain choices of parameters (specifically,
``relatively small'' $n$ and $\epsilon$ very small as a function
of $n$), this result gives a stronger bound than our result, due
to the better value of the constant.}

\bigskip

In this paper we present a tight quantitative version of Arrow's
theorem for general GSWFs. We show that the dependence of $\delta$
on $\epsilon$ is indeed polynomial, and compute the exact
dependence, up to logarithmic factors.

Before we present our results, we should specify the notion of
``the distance of a GSWF on $k$ alternatives satisfying the IIA
condition from a dictatorship or from breaching the Unanimity
condition''. We consider two different definitions of this notion.
In both definitions, the underlying probability measure is the
uniform measure on $(S_k)^n$ (the set of all possible profiles).

The first definition measures the distance of the GSWF under
examination from the family of GSWFs on $k$ alternatives which
satisfy the IIA condition and whose output is {\it always
transitive.} This family was partially characterized by
Wilson~\cite{Wilson}, and fully characterized by
Mossel~\cite{Mossel-Arrow}. It essentially consists of
combinations of dictatorships with constant functions (see
Section~\ref{sec:sub:preliminaries-mossel} for the exact
characterization).
\begin{definition}
Denote by $\mathcal{F}_k(n)$ the family of GSWFs on $k$
alternatives which satisfy the IIA condition and whose output is
always transitive. For a GSWF $F$ on $k$ alternatives that
satisfies the IIA condition, let
\[
D_1(F)=\min_{G \in \mathcal{F}_k(n)} \Pr[F \neq G].
\]
\end{definition}
We note that this is the definition that was used
in~\cite{Mossel-Arrow}. Our main result with respect to this
definition is the following:
\begin{theorem}\label{Thm:Main1}
There exists an absolute constant $C$ such that for any $k$ and
for any GSWF $F$ on $k$ alternatives that satisfies the IIA
condition, if the probability of non-transitive outcome in $F$ is
at most
\[
\delta(\epsilon)=C \cdot \left(\epsilon/k^2 \right)^3,
\]
then $D_1(F) \leq \epsilon$.
\end{theorem}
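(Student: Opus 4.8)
The plan is to reduce the general $k$-alternative problem to the three-alternative case and then to prove the three-alternative case via Fourier analysis, following the Kalai-Mossel framework but inserting a two-sided control on noise correlations. First I would recall Kalai's formula: for a GSWF on three alternatives $\{A,B,C\}$ satisfying IIA, determined by three Boolean functions $f,g,h\colon\{-1,1\}^n\to\{-1,1\}$ (encoding the societal preferences on the pairs $(A,B)$, $(B,C)$, $(C,A)$), the probability of a non-transitive outcome, under the uniform measure on rational profiles, equals an explicit expression of the form $\frac{1}{4}\bigl(1 - \text{(some combination of)}\ \mathbb{E}[f\cdot T_{1/3}g],\dots\bigr)$ — more precisely a linear combination of the noise correlations $\mathbb{E}[f T_\rho g]$ at $\rho = -1/3$. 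The key point is that small probability of non-transitivity forces these three noise correlations to be close to their extreme feasible values.

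The heart of the matter is the following stability statement on the cube: if $f,g$ are Boolean and $\mathbb{E}[f T_{-1/3} g]$ is within $\delta$ of the maximum value it can attain (given the relevant constraints on $\mathbb{E}[f],\mathbb{E}[g]$ coming from non-transitivity on the other pairs), then $f$ and $g$ are each $O(\text{poly}(\delta))$-close to a dictator on the same coordinate, or close to a constant. Here is where the new ingredient enters: the Bonami-Beckner inequality gives an \emph{upper} bound $\mathbb{E}[f T_\rho g] \le \|f\|_{1+\rho^2}\|g\|_{1+\rho^2}$-type control that pushes the Fourier weight of $f,g$ onto low levels, while Borell's \emph{reverse} hypercontractive inequality gives a matching \emph{lower} bound; used simultaneously they pin down the level-$1$ Fourier weight of $f$ and $g$ and force it to concentrate on a single common coordinate. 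I would quantify this: writing $W^{=1}[f] = \sum_i \hat f(i)^2$, the two inequalities yield $W^{=1}[f], W^{=1}[g] \ge 1 - O(\delta)$ and $\sum_i \hat f(i)\hat g(i) \ge 1 - O(\delta)$ (after the right normalization), whence by Cauchy-Schwarz there is a coordinate $i_0$ with $\hat f(i_0)^2, \hat g(i_0)^2 \ge 1 - O(\delta)$; a standard argument (e.g. the Friedgut-Kalai-Naor–type stability, or just an $L^2$ computation) then shows $f$ is $O(\delta)$-close to $\pm x_{i_0}$. Running this for all three pairs simultaneously, the common coordinate must be consistent, and the sign pattern forces either a global dictatorship by voter $i_0$ or one of the functions to be essentially constant — i.e. Unanimity is breached. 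This gives $D_1(F) \le \epsilon$ with $\delta = C\epsilon^3$ for $k = 3$.

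For general $k$, I would argue that a non-transitive outcome on any triple of alternatives already counts as a non-transitive outcome, so the hypothesis of the theorem implies that for \emph{every} triple, the restricted three-alternative GSWF has probability of non-transitivity at most $\delta(\epsilon/k^2)$. Applying the $k=3$ result to a single triple produces a candidate dictator $i_0$ (or a forced constancy); then I would propagate this across all triples, paying a union-bound factor of at most $\binom{k}{2}$ or $\binom{k}{3}$ in the closeness parameter, and use a consistency argument to glue the per-pair approximations into a single $G \in \mathcal{F}_k(n)$ with $\Pr[F\ne G]$ small. The polynomial loss $k^2$ in $\delta(\epsilon) = C(\epsilon/k^2)^3$ is exactly the price of this union bound together with Mossel's characterization of $\mathcal{F}_k(n)$ as combinations of dictators and constants.

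The main obstacle, I expect, is getting the \emph{sharp} polynomial exponent — obtaining $\delta \sim \epsilon^3$ rather than a worse power. The naive combination of hypercontractivity upper and lower bounds tends to lose factors, and squeezing it to the optimal exponent requires choosing the hypercontractive parameters carefully (the noise rate is fixed at $1/3$, so one must optimize over the $L^p$ norms that appear) and tracking the interplay between the three pairs rather than treating them in isolation. A secondary difficulty is the endpoint behavior when $\mathbb{E}[f]$ or $\mathbb{E}[g]$ is close to $\pm 1$ (near-constant functions), where the reverse hypercontractive inequality degenerates and one needs a separate direct argument; but in that regime the function is already close to a constant, so the conclusion ``breaches Unanimity'' is within reach by elementary means.
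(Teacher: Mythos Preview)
Your reduction from $k$ to three alternatives is in line with the paper (it is Mossel's Theorem~\ref{Thm:Leveraging}, used as a black box). The gap is in the heart of the three-alternative argument. You claim that applying Bonami--Beckner from above and Borell from below to the \emph{same} correlation $\mathbb{E}[fT_{-1/3}g]$ ``pins down the level-$1$ Fourier weight'' and yields $W^{=1}[f],W^{=1}[g]\ge 1-O(\delta)$, whence an FKN-type argument finishes. But neither inequality says anything about level-$1$ weight: Bonami--Beckner bounds the full correlation by $\|f\|_p\|g\|_q$, and Borell bounds it below by an expression in $\mathbb{E}[f],\mathbb{E}[g]$ alone. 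Squeezing the correlation between these two does not force Fourier mass onto level~$1$; when all three functions are balanced (say $\mathbb{E}[f]\approx\mathbb{E}[g]\approx\mathbb{E}[h]\approx 1/2$), Borell's lower bound is a fixed constant carrying no structural information, and the conclusion ``$W^{=1}[f]\ge 1-O(\delta)$'' simply does not follow.

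The paper's argument is organized the other way around from what you expect. The two hypercontractive inequalities are used \emph{only} in the regime where at least one of $f,g,h$ is already very close to a constant (Lemma~\ref{Lemma:Main}): Kalai's formula is rewritten as a signed sum of three noise correlations, the positive terms are bounded below via Borell, the negative term is bounded above via Bonami--Beckner, and the difference is shown to dominate $(D_1(F)/2)^3$. The balanced regime---where $D_1(F)$ exceeds a fixed constant and your mechanism would be needed---is handled by invoking Mossel's quantitative Arrow theorem (Theorem~\ref{Thm:Mossel1}, which relies on the non-linear invariance principle) as a black box; and the case where the nearest transitive GSWF is a dictatorship of voter~$i$ is reduced to the close-to-constant case by conditioning on voter~$i$'s ballot. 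Finding a direct argument that avoids this black box is listed in Section~\ref{sec:Open-Questions} as an open problem, so the FKN-style route you sketch is not known to work.
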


For the second definition, we note that a GSWF $F$ on $k$
alternatives that satisfies the IIA condition actually consists of
${{k}\choose{2}}$ independent Boolean functions $F_{ij}$ that
represent the choice functions amongst the pairs of alternatives
$(i,j)$ (for $1 \leq i<j \leq k$). The second definition is given
in terms of these functions.
\begin{definition}
Denote by $\mathcal G_2(n)$ the set of constant functions and
dictatorships on two alternatives. For a GSWF $F$ on $k$
alternatives that satisfies the IIA condition, let
\[
D_2(F)=\min_{1 \leq i<j \leq k} \min_{G \in \mathcal G_2(n)}
\Pr[F_{ij} \neq G],
\]
where $\{F_{ij}\}_{1 \leq i<j \leq k}$ are as defined above.
\end{definition}
Our main result with respect to this definition is the following:
\begin{theorem}\label{Thm:Main2}
There exists an absolute constant $C$ such that for any $k$ and
for any GSWF $F$ on $k$ alternatives that satisfies the IIA
condition, if the probability of non-transitive outcome in $F$ is
at most
\begin{equation}\label{Eq1.11}
\delta(\epsilon)=C \cdot \epsilon^{\frac{9(\sqrt{\log_2
(1/\epsilon)}+1/3)^2}{8\log_2(1/\epsilon)}},
\end{equation}
then $D_2(F) \leq \epsilon$.
\end{theorem}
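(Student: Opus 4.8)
\emph{Proof plan.} The plan is to reduce to three alternatives and then analyze Kalai's Fourier formula, using the Bonami--Beckner hypercontractive inequality together with Borell's reverse hypercontractive inequality to control, from both sides, the noise correlations between the three pairwise choice functions. First I would observe that for a GSWF $F$ on $k$ alternatives satisfying IIA and any triple $\{a,b,c\}$ of alternatives, the restriction of $F$ to $\{a,b,c\}$ is again an IIA GSWF on three alternatives, its three pairwise functions are among the $\binom k2$ functions $F_{ij}$, and any non-transitive outcome of the restriction is a non-transitive outcome of $F$; hence the non-transitivity probability of the restriction is at most $\delta(\epsilon)$. So it suffices to treat $k=3$, where we write $f_1,f_2,f_3\colon\{-1,1\}^n\to\{-1,1\}$ for the pairwise functions. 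By Kalai's formula, the non-transitivity probability under the uniform profile equals
\[
p=\tfrac14\Bigl(1+\langle f_1,T_{-1/3}f_2\rangle+\langle f_2,T_{-1/3}f_3\rangle+\langle f_1,T_{-1/3}f_3\rangle\Bigr),
\]
where $T_\rho$ is the noise operator, $\widehat{T_\rho u}(S)=\rho^{|S|}\hat u(S)$, the inner products taken with respect to the uniform measure on the common cube. If some $f_i$ is already $\epsilon$-close to a dictatorship or a constant we are done, since $D_2(F)$ is a minimum over the pairwise functions; so assume each $f_i$ is $\epsilon$-far from every dictatorship and every constant, and aim to prove $p\ge\delta(\epsilon)$, i.e.\ that the sum of the three noise correlations exceeds $-1+4\delta(\epsilon)$. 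Being $\epsilon$-far from a constant gives $\Pr[f_i=1],\Pr[f_i=-1]\ge\epsilon$; being $\epsilon$-far from every dictatorship gives, via the Friedgut--Kalai--Naor theorem (in the form ``$W^{\le1}[f_i]\ge1-\tau$ implies $f_i$ is $O(\tau)$-close to a dictatorship or a constant''), that the Fourier weight of $f_i$ above degree $1$ exceeds $c_0\epsilon$ for an absolute $c_0>0$.

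The heart of the argument is a two-sided estimate of the noise correlations. Fix a parameter $m$ (a Fourier-degree cutoff) to be chosen at the end, and split each $\langle f_i,T_{-1/3}f_j\rangle$ into the contribution of levels $\le m$ and the tail from levels $>m$. On a monomial of degree $>m$ the eigenvalue of $T_{-1/3}$ has absolute value $\le3^{-m}$, and one further application of Bonami--Beckner upgrades this to an $L^2$ estimate showing that the tail contributes to the sum of the three correlations only an error that decays geometrically in $m$; up to that error the sum is governed by the low-degree parts. For the low-degree parts I would show that their sum cannot come within $o(1)$ of $-1$ unless $f_1,f_2,f_3$ are essentially one common dictator: flipping the signs of the coordinates in one of the three cubes turns two of the three pairwise correlation parameters into $+\tfrac13$, and applying Borell's reverse hypercontractive inequality (for which, at correlation $\tfrac13$, the relevant moment exponent is $\tfrac1{1-1/3}=\tfrac32$) to the indicators of the level sets $\{f_i=\pm1\}$ and of the sets on which $f_i$ disagrees with its most influential coordinate --- all of measure $\ge\epsilon$ --- produces lower bounds on the relevant ``mixed'' noise correlations that preclude the cancellations needed for the sum to reach $-1$. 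The upshot is that if the degree-$\le m$ parts of $f_1,f_2,f_3$ are not within $L^2$-distance $\eta$ of a common $\pm x_\ell$, then the sum of the noise correlations exceeds $-1$ by an amount comparable to a fixed power of $\eta$, with $\eta$ linked to $\epsilon$ through the reverse-hypercontractivity exponent and to $m$ through the geometric tail. Balancing these two contributions --- the exponent of the resulting $\epsilon$-power tends to $\tfrac98$ while a competing factor grows with $m$, the optimum being $m\asymp\sqrt{\log_2(1/\epsilon)}$ --- yields precisely the exponent in \eqref{Eq1.11}.

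If instead the degree-$\le1$ parts of $f_1,f_2,f_3$ do lie within $L^2$-distance $\eta<\epsilon$ of a common coordinate $\pm x_\ell$ and levels $2,\dots,m$ carry negligible weight, then each Boolean $f_i$ has all but an $O(\eta)$-fraction of its Fourier weight at level $\le1$, and the Friedgut--Kalai--Naor stability theorem makes $f_i$ within $O(\eta)$ of $\pm x_\ell$ or of a constant, contradicting the standing assumption once $\eta$ has been driven below $\epsilon$. One also has to check that the signs attaching $x_\ell$ to $f_1,f_2,f_3$ are forced to be the unique pattern for which the corresponding dictatorial GSWF is transitive, which holds since under any other sign pattern one of the three noise correlations stays bounded away from $-\tfrac13$, keeping $p$ above an absolute constant.

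The main obstacle is making the interplay of the two hypercontractive inequalities quantitatively lossless: the forward inequality controls the high-degree tail but worsens as $m$ grows, while the reverse inequality needs a positive correlation parameter and delivers only a power-law --- not a constant-factor --- lower bound, so that the final lower bound on $p$ is a product of an $\epsilon$-power (exponent tending to $\tfrac98$) and a factor growing with $m$; tracking both at once and optimizing at $m\asymp\sqrt{\log_2(1/\epsilon)}$ is exactly what produces the sharp exponent in \eqref{Eq1.11}. The matching lower bound --- establishing optimality up to logarithmic factors --- is then furnished by an explicit GSWF obtained by perturbing a dictatorship on a set controlled by about $\sqrt{\log_2(1/\epsilon)}$ coordinates.
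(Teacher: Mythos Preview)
Your reduction to three alternatives and invocation of Kalai's formula are fine, and you are right that both Bonami--Beckner and Borell's reverse inequality are the key analytic tools. But the mechanism you describe for the exponent in~\eqref{Eq1.11} is not the correct one, and your outlined argument would not produce it.

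The expression $\frac{9(\sqrt{\log_2(1/\epsilon)}+1/3)^2}{8\log_2(1/\epsilon)}$ is not the result of optimizing a Fourier degree cutoff $m\asymp\sqrt{\log_2(1/\epsilon)}$. It is exactly the exponent that appears when one applies the reverse hypercontractive lower bound (Theorem~\ref{Thm:Inverse-Beckner1}) at correlation $1/3$ to two indicator functions of expectations $1/2$ and $\epsilon$: writing $\epsilon=(1/2)^\alpha$ so that $\alpha=\log_2(1/\epsilon)$, the bound reads $\tfrac12\cdot\epsilon^{(\sqrt\alpha+1/3)^2/((8/9)\alpha)}$, which is precisely~\eqref{Eq1.11}. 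The $\sqrt{\log_2(1/\epsilon)}$ is the $\sqrt\alpha$ in that formula, not a tuned truncation level. No degree cutoff, no tail--versus--head balancing, and no FKN stability theorem enters.

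Concretely, the paper's proof rewrites Kalai's formula (via the dual functions $\bar f$ and $f'$) so that $P(F)$ becomes a signed sum of three genuine noise correlations of \emph{nonnegative} functions, e.g.\ $P(F)=\langle T_{1/3}f',g\rangle+\langle T_{1/3}\bar g,1-h\rangle-\langle T_{1/3}f',1-h\rangle$. One then applies Borell's inequality directly to the positive terms and Bonami--Beckner (via H\"older/Cauchy--Schwarz) to the negative term, and checks case by case---according to the expectations $p_1,p_2,p_3$---that the positive term dominates. The bottleneck case, yielding exactly~\eqref{Eq1.11}, is when one choice function has expectation near $1/2$ and another near $\epsilon$. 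The case where all choice functions are bounded away from constants and dictators (your standing assumption) is not handled by this mechanism at all; the paper invokes Mossel's quantitative Arrow theorem as a black box there, which only costs a constant. Finally, when the nearest function in $\mathcal G_2(n)$ is a dictator rather than a constant, one conditions on that voter's ballot and reduces to the near-constant case; FKN is never used. Your tightness example is also off: the matching upper construction uses monotone threshold functions with expectations $\epsilon,\,1/2,\,1-\epsilon$, not a dictatorship perturbed on a $\sqrt{\log(1/\epsilon)}$-coordinate set.
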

Note that for small values of $\epsilon$, the exponent of
$\epsilon$ in~(\ref{Eq1.11}) tends to $9/8$.

\medskip

We show that the dependence of $\delta$ on $\epsilon$ in
Theorems~\ref{Thm:Main1} and~\ref{Thm:Main2} is tight, up to
logarithmic factors in $\epsilon$. The examples showing the
tightness are GSWFs on three alternatives, in which all the three
choice functions $F_{12},F_{23},$ and $F_{13}$ are monotone
threshold functions. In the example of Theorem~\ref{Thm:Main1},
the expectations of the choice functions are
$0,1-\epsilon,1-\epsilon$ (in particular, one of the functions is
constant!), and in the example of Theorem~\ref{Thm:Main2}, the
expectations are $\epsilon,1/2,1-\epsilon$.

\medskip

As in the works of Kalai and Mossel, the techniques we use are
mainly analytic. Our proof essentially consists of three steps:
\begin{enumerate}
\item We consider a GSWF $F$ on three alternatives, and use a
modification of Kalai's formula to express the probability of
non-transitive outcome as a linear combination of ``noise
correlations'' between the Boolean functions $F_{12},F_{23},$ and
$F_{13}$ (see Section~\ref{sec:sub:preliminaries-noise} for the
definition of noise correlation).

\item We show that if at least one of the functions
$F_{12},F_{23},$ and $F_{13}$ is close enough to a constant
function, then the Bonami-Beckner hypercontractive
inequality~\cite{Bonamie,Beckner} and a reverse hypercontractive
inequality due to Borell~\cite{Borell} can be applied to obtain
simultaneously upper bounds and lower bounds on the noise
correlations. Combination of these bounds yields a lower bound on
the probability of non-transitive outcome in terms of $D_1(F)$ or
$D_2(F)$.

\item To complete the proof, we use the techniques of Mossel to
``cover'' all the remaining cases (i.e., functions with $D_1(F)$
or $D_2(F)$ greater than a fixed constant, etc.)
\end{enumerate}

We note that since in the case where $D_1(F)$ or $D_2(F)$ is
greater than a fixed constant we use Mossel's result as a black
box, the value of the constant we obtain in the dependence of
$\delta(\epsilon)$ on $\epsilon$ is extremely low, and seems to be
very far from optimality. Extension of our techniques to cover all
the cases would make the proof free of non-linear invariance
arguments, and lead to a ``decent'' value of the constant. This is
one of the main open problems left in our paper.

\medskip

This paper is organized as follows: In
Section~\ref{sec:Preliminaries} we present the tools used in the
later sections. In Section~\ref{sec:Lemma} we prove our main
lemma. We deduce Theorems~\ref{Thm:Main1} and~\ref{Thm:Main2} from
the main lemma in Section~\ref{sec:Proof}. In
Section~\ref{sec:Tightness} we discuss the tightness of our
results. We conclude the paper with questions for further research
in Section~\ref{sec:Open-Questions}.

\section{Preliminaries}
\label{sec:Preliminaries}

In this section we present the tools used in the next sections.
First we describe the Fourier-Walsh expansion of functions on the
discrete cube. We continue with the noise operator and the
hypercontractive inequalities of Bonami-Beckner and of Borell.
Finally, we cite the statements from Mossel's proof of the
quantitative Arrow theorem~\cite{Mossel-Arrow} that are used as a
black box in our proof.

\subsection{Fourier-Walsh Expansion of Functions on the Discrete
Cube}
\label{sec:sub:Fourier}

Throughout the paper we consider the discrete cube
$\Omega=\{0,1\}^n$, endowed with the uniform measure $\mu$.
Elements of $\Omega$ are represented either by binary vectors of
length $n$, or by subsets of $\{1,2,\ldots,n\}$. Denote the set of
all real-valued functions on $\Omega$ by $X$. The inner product of
functions $f,g \in X$ is defined as usual as
\[
\langle f,g \rangle = \mathbb{E}_{\mu} [fg] = \frac{1}{2^n}
\sum_{x \in \{0,1\}^n} f(x)g(x).
\]
The Rademacher functions $\{r_i\}_{i=1}^n$, defined as
$r_i(x_1,\ldots,x_n)=2x_i-1$, constitute an orthonormal system in
$X$. Moreover, this system can be completed to an orthonormal
basis in $X$ by defining
\[
r_S = \prod_{i \in S} r_i,
\]
for all $S \subset \{1,\ldots,n\}$. Every function $f \in X$ can
be represented by its Fourier expansion with respect to the system
$\{r_S\}_{S \subset \{1,\ldots,n\}}$:
\[
f = \sum_{S \subset \{1,\ldots,n\}} \langle f,r_S \rangle r_S.
\]
This representation is called the Fourier-Walsh expansion of $f$.
The coefficients in this expansion are denoted by
\[
\hat f(S) = \langle f,r_S \rangle.
\]

\medskip

\noindent The Fourier-Walsh expansion allows to adapt tools from
classical harmonic analysis to the discrete setting, and to use
them in the study of Boolean functions. Since the introduction of
such analytic methods in the landmark paper of Kahn, Kalai, and
Linial~\cite{KKL} in 1988, they were intensively studied, and led
to applications in numerous fields, including combinatorics,
theoretical computer science, social choice theory, mathematical
physics, etc. (see, e.g., the survey~\cite{Kalai-Safra}).

\medskip

\noindent The most basic analytic tool we use is the Parseval
identity, asserting that for all $f,g \in X$,
\[
\langle f,g \rangle = \sum_{S \subset \{1,\ldots,n\}} \hat f(S)
\hat g(S),
\]
and in particular, $\sum_{S \subset \{1,\ldots,n\}} \hat f(S)^2 =
||f||_2^2$, for any $f \in X$.

\medskip

\noindent The next simple tool we use is the close relation
between the Fourier-Walsh expansions of a function and of the
respective dual function.
\begin{definition}
Let $f:\{0,1\}^n \rightarrow \{0,1\}$. The dual function of $f$,
which we denote by $\bar{f}:\{0,1\}^n \rightarrow \{0,1\}$, is
defined by
\[
\bar{f}(x_1,x_2,\ldots,x_n)=1-f(1-x_1,1-x_2,\ldots,1-x_n).
\]
\end{definition}
\begin{claim}\label{Claim:Dual1}
Consider the Fourier-Walsh expansions of a Boolean function $f$
and its dual function $\bar{f}$. For any $S \subset
\{1,\ldots,n\}$ with $|S| \geq 1$,
\begin{equation}\label{Eq:Dual1}
\widehat{\bar{f}}(S) = (-1)^{|S|-1} \hat f(S).
\end{equation}
\end{claim}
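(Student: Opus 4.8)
The plan is to reduce everything to a single observation about how the Rademacher functions behave under the coordinate flip $x \mapsto \mathbf{1}-x$, where I write $\mathbf{1}-x = (1-x_1,\ldots,1-x_n)$. Since $r_i(1-x_1,\ldots,1-x_n) = 2(1-x_i)-1 = -(2x_i-1) = -r_i(x_1,\ldots,x_n)$, taking products over $i \in S$ gives $r_S(\mathbf{1}-x) = (-1)^{|S|} r_S(x)$ for every $S \subseteq \{1,\ldots,n\}$.

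Next I would apply this to the Fourier-Walsh expansion $f = \sum_S \hat f(S) r_S$. Substituting $\mathbf{1}-x$ for $x$ yields
\[
f(\mathbf{1}-x) = \sum_{S} \hat f(S) r_S(\mathbf{1}-x) = \sum_{S} (-1)^{|S|} \hat f(S)\, r_S(x),
\]
so the function $g(x) := f(1-x_1,\ldots,1-x_n)$ has Fourier coefficients $\hat g(S) = (-1)^{|S|}\hat f(S)$. Since $\bar f = 1 - g$ and the constant function $1$ equals $r_\emptyset$, its expansion contributes only to the coefficient of $S = \emptyset$. Hence for every $S$ with $|S| \geq 1$,
\[
\widehat{\bar f}(S) = \widehat{1}(S) - \hat g(S) = 0 - (-1)^{|S|}\hat f(S) = (-1)^{|S|-1}\hat f(S),
\]
which is exactly~(\ref{Eq:Dual1}).

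There is no real obstacle here: the statement follows immediately once the sign behavior of $r_S$ under the antipodal map is written down, and the only mild point to be careful about is isolating the role of the additive constant $1$, which is handled by the fact that it lives entirely on the empty Fourier coefficient. (One could equivalently verify~(\ref{Eq:Dual1}) directly from $\widehat{\bar f}(S) = \langle \bar f, r_S\rangle$ by the change of variables $x \mapsto \mathbf{1}-x$, which preserves the uniform measure $\mu$, but the computation is the same.)
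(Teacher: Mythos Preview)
Your proof is correct and is exactly the natural argument; the paper itself omits the proof entirely, calling it ``simple,'' so there is nothing to compare against. Your derivation of $r_S(\mathbf{1}-x)=(-1)^{|S|}r_S(x)$ and the subsequent handling of the constant term are precisely what one would expect here.
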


\noindent The simple proof of the claim is omitted. We use also a
variant of the dual function: $f'(x)=1-\bar{f}(x)$, defined as
\[
f'(x_1,x_2,\ldots,x_n)=f(1-x_1,1-x_2,\ldots,1-x_n).
\]
Similarly to Claim~\ref{Claim:Dual1}, it is easy to see that for
any $|S| \geq 1$,
\begin{equation}\label{Eq:Dual2}
\hat{f}'(S) = (-1)^{|S|} \hat{f}(S).
\end{equation}

\noindent In Kalai's proof of the quantitative Arrow theorem for
neutral GSWFs~\cite{Kalai-Choice}, only the most basic analytic
tools (like the Parseval identity) were used. Following the proof
of Mossel~\cite{Mossel-Arrow}, we use also more advanced analytic
tools, related to the noise operator presented below.


\subsection{The Noise Operator and Hypercontractive Inequalities}
\label{sec:sub:preliminaries-noise}

The {\it noise operator}, defined in~\cite{Beckner,Bonamie}, is a
convolution operator that represents the application of the
function on a slightly perturbed input.
\begin{definition}
For $x \in \{0,1\}^n$, the $\epsilon$-noise perturbation of $x$,
denoted by $N_\epsilon(x)$, is a distribution obtained from $x$ by
independently keeping each coordinate of $x$ unchanged with
probability $1-\epsilon$, and replacing it by a random value with
probability $\epsilon$.
\end{definition}
\begin{definition}
Let $f:\{0,1\}^n \rightarrow \{0,1\}$. For $0 \leq \epsilon \leq
1$, the noise operator $T_{\epsilon}$ applied to $f$ is defined by
\[
T_{\epsilon}f(x)=\mathbb{E}_{y \sim N_{1-\epsilon}(x)} [f(y)].
\]
\end{definition}

\noindent It is easy to see that the noise operator has a
convenient representation in terms of the Fourier-Walsh expansion:
\begin{claim}\label{Claim:Noise1}
Consider a function $f$ on the discrete cube with a Fourier-Walsh
expansion $f=\sum_S \hat f(S) r_S$. The Fourier-Walsh expansion of
$T_{\epsilon}f$ is given by:
\begin{equation}
T_{\epsilon} f = \sum_S \epsilon^{|S|} \hat f(S) r_S.
\label{Eq:Beckner0}
\end{equation}
\end{claim}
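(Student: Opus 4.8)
The plan is to exploit linearity of the noise operator and reduce the computation to its action on the Fourier--Walsh basis. First I would observe that $T_\epsilon$ is a linear operator on $X$: for each fixed $x$, the map $f \mapsto \mathbb{E}_{y \sim N_{1-\epsilon}(x)}[f(y)]$ is an expectation, hence linear in $f$. Consequently it suffices to prove that $T_\epsilon r_S = \epsilon^{|S|} r_S$ for every $S \subseteq \{1,\ldots,n\}$; applying $T_\epsilon$ term-by-term to $f = \sum_S \hat f(S)\, r_S$ then yields $T_\epsilon f = \sum_S \epsilon^{|S|} \hat f(S)\, r_S$, and since $\{r_S\}$ is an orthonormal basis of $X$ this is precisely the Fourier--Walsh expansion of $T_\epsilon f$, i.e.\ $\widehat{T_\epsilon f}(S) = \epsilon^{|S|}\hat f(S)$.

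To evaluate $T_\epsilon r_S(x)$, I would use that $N_{1-\epsilon}(x)$ perturbs the coordinates independently: conditioned on $x$, the coordinate $y_i$ equals $x_i$ with probability $\epsilon$ and is a fresh uniform bit in $\{0,1\}$ with probability $1-\epsilon$, independently over $i$. Hence the expectation of $r_S(y) = \prod_{i\in S} r_i(y_i)$ factors as $\prod_{i \in S}\mathbb{E}[r_i(y_i)]$. For each $i \in S$, because $r_i$ has mean zero under the uniform measure on the $i$-th coordinate, $\mathbb{E}[r_i(y_i)] = \epsilon\, r_i(x_i) + (1-\epsilon)\cdot 0 = \epsilon\, r_i(x_i)$, so multiplying over $i \in S$ gives $T_\epsilon r_S(x) = \epsilon^{|S|} r_S(x)$, as desired. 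Combining this with the linearity argument completes the proof.

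There is essentially no serious obstacle here; the only point meriting a moment's care is the rerandomization step, where one should note that whether a resampled coordinate is drawn uniformly from $\{0,1\}$ or obtained by XOR-ing with a uniform bit, it is uniformly distributed on $\{0,1\}$, so that factor contributes $0$ to $\mathbb{E}[r_i(y_i)]$ and the product telescopes to the factor $\epsilon^{|S|}$.
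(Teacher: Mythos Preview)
Your proof is correct and complete; this is the standard argument, exploiting linearity of $T_\epsilon$ together with the fact that the Walsh characters are eigenfunctions of the noise operator. The paper does not actually supply a proof of this claim (it is introduced with ``It is easy to see''), so there is nothing further to compare.
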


\noindent Since $T_{\epsilon} f$ represents the application of $f$
on a noisy variant of the input, it makes sense to define the {\it
$\epsilon$-noise correlation} of two functions $f$ and $g$ as
$\langle T_{\epsilon}f,g \rangle$. Using the Parseval identity, we
get an equivalent definition in terms of the Fourier-Walsh
expansion (note that the definition is symmetric between $f$ and
$g$):
\begin{definition}
Given two functions $f,g:\{0,1\}^n \rightarrow \{0,1\}$, the
$\epsilon$-noise correlation of $f,g$ is
\begin{equation}\label{Eq:Noisy1}
\langle T_{\epsilon} f,g \rangle = \sum_{S \subset
\{1,2,\ldots,n\}} \epsilon^{|S|} \hat f(S) \hat g(S).
\end{equation}
\end{definition}

\noindent In the proof of Lemma~\ref{Lemma:Main}, we express the
probability of non-transitive outcome in a GSWF $F$ on three
alternatives in terms of the noise correlations between the
Boolean choice functions $F_{12},F_{23},$ and $F_{13}$. Then we
obtain upper and lower bounds on the noise correlations using the
hypercontractive inequalities presented below.

\medskip

\noindent The first hypercontractive inequality we use is the
Bonami-Beckner inequality, discovered independently by
Bonami~\cite{Bonamie} in 1970 and by Beckner~\cite{Beckner} in
1975.
\begin{theorem}[Bonami,Beckner]
Let $f:\{0,1\}^n \rightarrow \mathbb{R}$, and let $q_1 \geq q_2
\geq 1$. Then
\[
||T_{\epsilon}f||_{q_1} \leq ||f||_{q_2}, \qquad \mbox{ for all }
0 \leq \epsilon \leq \left( \frac{q_2-1}{q_1-1} \right)^{1/2}.
\]
In particular,
\[
||T_{\epsilon}f||_2 \leq ||f||_{1+\epsilon^2}, \mbox{ for all  } 0
\leq \epsilon \leq 1.
\]
\end{theorem}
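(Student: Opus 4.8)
The plan is to follow the classical two-step route: first reduce the $n$-dimensional inequality to the one-dimensional (``two-point'') case by a tensorization argument, and then establish the two-point inequality by elementary calculus. Observe that the second (``in particular'') assertion is the special case $q_1=2$, $q_2=1+\epsilon^2$, so it suffices to prove the general statement.

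\textbf{Step 1 (Tensorization).} I would induct on $n$; the case $n=0$ is trivial, since then $T_\epsilon f=f$. For the inductive step, write $f=g+r_n h$ with $g,h$ depending only on $x_1,\dots,x_{n-1}$, so that $g+h$ and $g-h$ are the restrictions $f(\cdot,1)$ and $f(\cdot,0)$. By the Fourier formula for the noise operator (Claim~\ref{Claim:Noise1}), $T_\epsilon f=T'_\epsilon g+\epsilon\,r_n\,T'_\epsilon h$, where $T'_\epsilon$ acts on the first $n-1$ coordinates. Fixing $x'=(x_1,\dots,x_{n-1})$ and treating $A=T'_\epsilon g(x')$ and $B=T'_\epsilon h(x')$ as constants, the one-dimensional two-point inequality applied in the variable $x_n$ bounds $\mathbb{E}_{x_n}|T_\epsilon f|^{q_1}$ by $\bigl(\tfrac12[|A+B|^{q_2}+|A-B|^{q_2}]\bigr)^{q_1/q_2}$, where $A+B=T'_\epsilon f(\cdot,1)(x')$ and $A-B=T'_\epsilon f(\cdot,0)(x')$. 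Averaging over $x'$, taking the $q_1$-th root, and invoking Minkowski's inequality for the $L^{q_1/q_2}(x')$-norm (legitimate since $q_1/q_2\ge1$) separates the two $q_2$-th powers; applying the induction hypothesis in dimension $n-1$ to $f(\cdot,1)$ and $f(\cdot,0)$ and a short computation then collapses the right-hand side to $\|f\|_{q_2}$.

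\textbf{Step 2 (Two-point inequality).} It remains to treat $n=1$: for $f=a+b\,r_1$ I must show $\|a+\epsilon b\,r_1\|_{q_1}\le\|a+b\,r_1\|_{q_2}$ whenever $\epsilon^2(q_1-1)\le q_2-1$. Replacing $f$ by $-f$ if necessary we may take $a\ge0$; the case $a=0$ is immediate from $\epsilon\le1$; otherwise rescale to $a=1$ and, using the symmetry $r_1\mapsto-r_1$ (which changes neither side), take $b\ge0$. The claim reduces to a one-real-variable inequality between $\bigl(\tfrac12[(1+\epsilon b)^{q_1}+|1-\epsilon b|^{q_1}]\bigr)^{1/q_1}$ and $\bigl(\tfrac12[(1+b)^{q_2}+|1-b|^{q_2}]\bigr)^{1/q_2}$, which I would prove by comparing the two sides as functions of $b\ge0$: expanding around $b=0$, the constant terms are $1$, the linear terms vanish, the coefficients of $b^2$ are $\tfrac12\epsilon^2(q_1-1)$ and $\tfrac12(q_2-1)$ (this is precisely where the hypothesis enters), and the higher-order coefficients are checked to preserve the inequality. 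For the special case $q_1=2$, $q_2=1+\epsilon^2$ that this paper needs, writing $p:=1+\epsilon^2\in[1,2]$ and squaring reduces the claim to $(1+(p-1)b^2)^{p/2}\le\tfrac12[(1+b)^p+|1-b|^p]$; for $0\le b\le1$ this follows by combining $(1+(p-1)b^2)^{p/2}\le1+\tfrac{p(p-1)}2b^2$ (concavity of $t\mapsto t^{p/2}$ for $p\le2$) with $\tfrac12[(1+b)^p+(1-b)^p]=\sum_{k\ge0}\binom p{2k}b^{2k}\ge1+\tfrac{p(p-1)}2b^2$ (all $\binom p{2k}$ being nonnegative when $1\le p\le2$), and the range $b>1$ is handled by a crude direct estimate.

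\textbf{Main obstacle.} The substantive part is Step 2: although it is ``merely'' one-variable calculus, the absolute values $|1-\epsilon b|$ and $|1-b|$ force a case distinction according to the signs of their arguments, and making the term-by-term series comparison rigorous over the whole range $q_1\ge q_2\ge1$ — rather than via an ad hoc monotonicity argument for the ratio of the two sides — takes some care. Step 1, by contrast, is routine given the two-point case, the one delicate point being to apply Minkowski's inequality in the correct direction. For the use made in this paper only $q_1=2$ is needed, and for that case the argument sketched above is self-contained.
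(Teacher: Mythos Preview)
The paper does not supply a proof of this theorem: it is quoted in the preliminaries section as a known result due to Bonami and Beckner, with references to the original papers, and is then used as a black box (via Proposition~\ref{Prop:Upper-Bound1}). So there is no ``paper's own proof'' to compare against.

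Your proposal is the standard textbook route (tensorization plus the two-point lemma) and is essentially correct. A couple of small remarks. In Step~1, the Minkowski step deserves one more line of care: after bounding $\mathbb{E}_{x_n}|T_\epsilon f|^{q_1}$ pointwise in $x'$, you should take the $L^{q_1/q_2}(x')$-norm of the function $x'\mapsto \tfrac12[|A+B|^{q_2}+|A-B|^{q_2}]$, apply the triangle inequality there (this is where $q_1/q_2\ge1$ is used), and only then invoke the inductive hypothesis on each summand; your wording ``separates the two $q_2$-th powers'' is right but compressed. In Step~2, for the general $q_1\ge q_2\ge1$ case the phrase ``the higher-order coefficients are checked to preserve the inequality'' hides the only genuinely nontrivial computation; the clean way is to compare term-by-term after raising both sides to the $q_2$-th power and expanding, or to argue via a monotonicity-in-$\epsilon$ reduction. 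Your detailed treatment of the special case $q_1=2$ is correct and is all the present paper actually requires. The ``crude direct estimate'' for $b>1$ in that special case can be made precise by noting that $(1+b)^p\ge (2b)^p\ge 2b^p\ge 2(1+(p-1)b^2)^{p/2}$ fails in general, so one instead uses $(1+b)^p+(b-1)^p\ge 2b^p$ together with $b^p\ge (1+(p-1)b^2)^{p/2}$, the latter being equivalent to $b^2\ge 1+(p-1)b^2$, i.e.\ $(2-p)b^2\ge1$, which need not hold; a safer route for $b>1$ is the substitution $b\mapsto 1/b$ after dividing through by $b^p$, reducing back to the $0<b\le1$ case with the roles of the two endpoints swapped. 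You should make that reduction explicit.
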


\noindent This inequality was first applied in a combinatorial
context in~\cite{KKL}, and since then it was used in numerous
papers in the field. We combine the Bonami-Beckner inequality with
the Cauchy-Schwarz inequality to obtain an {\it upper bound} on
the $\epsilon$-noise correlation of Boolean functions. The upper
bound is presented here for $\epsilon=1/3$ since this is the case
we use in the proof of Lemma~\ref{Lemma:Main}, but it can be
immediately generalized to any $0 \leq \epsilon \leq 1$.
\begin{proposition}\label{Prop:Upper-Bound1}
Let $f,g:\{0,1\}^n \rightarrow \{0,1\}$, and denote $\mathbb{E}
[f]=p_1$, and $\mathbb{E}[g] = p_2$. Then:
\begin{equation}
\sum_S (\frac{1}{3})^{|S|} \hat f(S) \hat g(S) \leq \min
\left(p_1^{0.9} p_2^{0.5}, p_1^{0.75} p_2^{0.75} \right).
\end{equation}
\end{proposition}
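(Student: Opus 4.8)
The plan is to bound $\langle T_{1/3}f, g\rangle$ in two ways, each via a Hölder/Cauchy–Schwarz split followed by a single application of Bonami–Beckner. Write $\langle T_{1/3}f,g\rangle = \langle T_{1/\sqrt3}f,\ T_{1/\sqrt3}g\rangle$, using the semigroup identity $T_{1/3}=T_{1/\sqrt3}\circ T_{1/\sqrt3}$ together with self-adjointness of $T_{\epsilon}$ (both immediate from Claim 2.8, since $(1/\sqrt3)^{|S|}\cdot(1/\sqrt3)^{|S|}=(1/3)^{|S|}$). By Cauchy–Schwarz this is at most $\|T_{1/\sqrt3}f\|_2\,\|T_{1/\sqrt3}g\|_2$. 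Now apply Bonami–Beckner with $q_2=1+1/3=4/3$ (the exponent $\sqrt{q_2-1}=1/\sqrt3$ matches the noise parameter exactly): $\|T_{1/\sqrt3}f\|_2 \le \|f\|_{4/3}$, and since $f$ is Boolean, $\|f\|_{4/3}=(\mathbb{E}[f])^{3/4}=p_1^{3/4}$, and likewise $\|T_{1/\sqrt3}g\|_2\le p_2^{3/4}$. Multiplying gives the bound $p_1^{0.75}p_2^{0.75}$, which is the second term in the minimum.

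For the first term $p_1^{0.9}p_2^{0.5}$, I would split the noise asymmetrically instead of applying all of it symmetrically. Again start from $\langle T_{1/3}f,g\rangle = \langle T_{1/3}f,\ T_1 g\rangle$ and use Hölder with a conjugate pair $(r, r')$: $\langle T_{1/3}f,g\rangle \le \|T_{1/3}f\|_{r}\,\|g\|_{r'}$. Here I want to push $g$ all the way down to the $L^1$ norm to produce the factor $\|g\|_1 = p_2$... but that overshoots the exponent $0.5$, so instead one balances: take $r'$ so that $\|g\|_{r'}=p_2^{1/r'}=p_2^{0.5}$, i.e. $r'=2$, hence $r=2$ as well — but then we are back to the symmetric split and get $p_1^{0.75}$, not $p_1^{0.9}$. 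The correct route is to apply Bonami–Beckner in its general two-parameter form: $\|T_{1/3}f\|_{q_1}\le\|f\|_{q_2}$ whenever $1/3\le\sqrt{(q_2-1)/(q_1-1)}$, which lets us take $q_2$ smaller (closer to $1$) at the cost of $q_1$ larger than $2$. Choosing $q_1=2$ forces $q_2\ge 1+1/9=10/9$, giving $\|T_{1/3}f\|_2\le\|f\|_{10/9}=p_1^{9/10}=p_1^{0.9}$; then pair this with $\|g\|_2=p_2^{1/2}=p_2^{0.5}$ via Cauchy–Schwarz. Wait — $\langle T_{1/3}f,g\rangle\le\|T_{1/3}f\|_2\|g\|_2$, and $\|g\|_2=p_2^{1/2}$ since $g$ is Boolean. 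This yields exactly $p_1^{0.9}p_2^{0.5}$.

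Collecting the two estimates gives $\langle T_{1/3}f,g\rangle\le\min(p_1^{0.9}p_2^{0.5},\ p_1^{0.75}p_2^{0.75})$, which by (2.5) is the claimed inequality. The two computations are symmetric in spirit: one applies $T_{1/3}$ entirely to $f$ and uses the full strength of Bonami–Beckner to drop $f$'s norm to $L^{10/9}$, keeping $g$ at $L^2$; the other distributes the noise symmetrically, sending both functions to $L^2$ after pushing each input norm down to $L^{4/3}$. The only mild subtlety — and the step most worth stating carefully — is the bookkeeping on exponents: one must check that the noise parameter $1/3$ indeed lies within the admissible range $\sqrt{(q_2-1)/(q_1-1)}$ for the chosen pairs $(q_1,q_2)=(2,10/9)$ and $(2,4/3)$, which amounts to the trivial inequalities $1/3\le 1/3$ and $1/3\le 1/\sqrt3$. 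There is no genuine obstacle here; the proposition is a direct corollary of Bonami–Beckner once the right exponents are selected, and the asymmetry between the exponents $0.9,0.5$ versus $0.75,0.75$ is precisely what allows the minimum to be effective when one of $p_1,p_2$ is much smaller than the other — which is exactly the regime exploited in the proof of Lemma~\ref{Lemma:Main}.
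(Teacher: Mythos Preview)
Your proof is correct and matches the paper's approach exactly: Cauchy--Schwarz followed by Bonami--Beckner, once with all of the noise placed on $f$ (giving $\|T_{1/3}f\|_2\|g\|_2\le\|f\|_{10/9}\|g\|_2=p_1^{0.9}p_2^{0.5}$) and once with the noise split symmetrically via $T_{1/3}=T_{1/\sqrt3}\circ T_{1/\sqrt3}$ (giving $\|f\|_{4/3}\|g\|_{4/3}=p_1^{0.75}p_2^{0.75}$). One tiny bookkeeping slip in your closing remark: for the pair $(q_1,q_2)=(2,4/3)$ the noise parameter actually being applied is $1/\sqrt3$, not $1/3$, so the inequality to verify is $1/\sqrt3\le 1/\sqrt3$ rather than $1/3\le 1/\sqrt3$.
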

\begin{proof}
By Claim~\ref{Claim:Noise1}, the Parseval identity, the
Cauchy-Schwarz inequality and the Bonami-Beckner hypercontractive
inequality, we get:
\[
\sum_S (\frac{1}{3})^{|S|} \hat f(S) \hat g(S) = \langle T_{1/3}
f, g \rangle \leq ||T_{1/3}f||_2 ||g||_2 \leq ||f||_{1+1/9}
||g||_2 = p_1^{0.9} p_2^{0.5}.
\]
Similarly,
\begin{align*}
\sum_S (\frac{1}{3})^{|S|} \hat f(S) \hat g(S) &= \langle
T_{(1/\sqrt{3})} f, T_{(1/\sqrt{3})} g \rangle \leq
||T_{(1/\sqrt{3})}f||_2 ||T_{(1/\sqrt{3})}g||_2 \\
&\leq ||f||_{1+1/3} ||g||_{1+1/3} = p_1^{0.75} p_2^{0.75}.
\end{align*}
This completes the proof.
\end{proof}

\medskip

\noindent The second hypercontractive inequality we use is a
reverse hypercontractive inequality, due to Borell~\cite{Borell}.
This inequality asserts that under some conditions, a variant of
the Bonami-Beckner inequality holds in the inverse direction.
\begin{theorem}[Borell]
Let $f:\{0,1\}^n \rightarrow \mathbb{R}_+$, and let $q_1 \leq q_2
\leq 1$. Then
\[
||T_{\epsilon}f||_{q_1} \geq ||f||_{q_2}, \qquad \mbox{ for all }
0 \leq \epsilon \leq \left( \frac{q_2-1}{q_1-1} \right)^{1/2}.
\]
\end{theorem}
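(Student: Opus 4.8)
The plan is to prove Borell's inequality by the classical two-step route: reduce the $n$-dimensional statement to a one-dimensional ``two-point'' inequality by tensorization, and then settle the two-point inequality by calculus. Since $\|\cdot\|_q$ for $q\le 1$ is only well behaved on strictly positive functions, I would first assume $f>0$ (the general case $f\ge 0$ follows by applying the result to $f+\eta$ and letting $\eta\downarrow 0$, with $\|f\|_0:=\exp(\mathbb E\log f)$ when $q_1=0$), and dispose of the degenerate case $q_2=1$ (then the hypothesis forces $\epsilon=0$, $T_0f\equiv\mathbb E[f]$, so the inequality is the trivial $\mathbb E[f]\ge\mathbb E[f]$). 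A standard tensorization argument --- induct on $n$, freeze all but one coordinate, apply the one-dimensional inequality in that coordinate, and invoke the induction hypothesis on the rest --- reduces everything to $n=1$; the only subtle point is that interchanging an expectation with a $q$-th power for $q<1$ requires a reverse-Minkowski-type inequality, valid precisely because the functions involved are nonnegative. This is where the hypothesis $f:\{0,1\}^n\to\mathbb R_+$ is genuinely used.

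For $n=1$ I would first reduce to the extremal noise value $\epsilon_0:=\bigl((1-q_2)/(1-q_1)\bigr)^{1/2}$: since $T_\epsilon=T_{\epsilon/\epsilon_0}\circ T_{\epsilon_0}$ and, for $q_1<1$ and nonnegative arguments, $\delta\mapsto\|T_\delta g\|_{q_1}$ is non-increasing (a short Jensen computation, using that $T_\delta$ is an averaging operator and that $\|T_0 g\|_{q_1}=\|g\|_1\ge\|g\|_{q_1}$), it suffices to treat $\epsilon=\epsilon_0$. A one-dimensional function is a pair $f(0)=a$, $f(1)=b$ with $a,b>0$, with $T_\epsilon f(0)=\tfrac{(1+\epsilon)a+(1-\epsilon)b}{2}$ and $T_\epsilon f(1)=\tfrac{(1-\epsilon)a+(1+\epsilon)b}{2}$, so the claim becomes a two-variable inequality; by homogeneity one may normalize $\tfrac{a^{q_2}+b^{q_2}}{2}=1$ and minimize $\|T_{\epsilon_0}f\|_{q_1}$ over this curve. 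The diagonal point $a=b=1$ is a critical point by symmetry and there both sides equal $1$; that $\epsilon_0$ is exactly the threshold shows up in the second-order behaviour there --- with $a=1+t$, $b=1-t$ one has $T_\epsilon f(0)=1+\epsilon t$, $T_\epsilon f(1)=1-\epsilon t$, and expanding gives $\log\tfrac{\|T_\epsilon f\|_{q_1}}{\|f\|_{q_2}}=\tfrac{t^2}{2}\bigl[(1-q_2)-(1-q_1)\epsilon^2\bigr]+O(t^4)$, which is $\ge 0$ near $t=0$ iff $\epsilon\le\epsilon_0$. To finish, one shows $a=b$ is the \emph{global} minimizer on the constraint curve: setting $b=1$, $a=r>0$, this is a one-variable calculus exercise --- equality only at $r=1$, the limits $r\to 0,\infty$ checked by hand to be $\ge 1$, and a Lagrange-multiplier argument ruling out other interior critical points.

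The genuine work is this last step --- establishing global, not merely local, optimality of $a=b$, and checking that the constant is exactly $\epsilon_0$ --- so I expect it to be the main obstacle. An alternative that sidesteps the explicit two-point minimization is the semigroup proof of Gross type: put $\epsilon=e^{-t}$, let $q(t)=1-(1-q_2)e^{2t}$ (so $q(0)=q_2$, $q(t_0)=q_1$ for $e^{-t_0}=\epsilon_0$, and $q(t)<1$ throughout), and prove $\tfrac{d}{dt}\log\|T_{e^{-t}}f\|_{q(t)}\ge 0$. Differentiating, with $u=T_{e^{-t}}f$ and $\tfrac{d}{dt}u=-Lu$ for $L$ the number operator $Lg=\sum_S|S|\hat g(S)r_S$, this reduces to the inequality $-\mathcal E(u^{q-1},u)\ge\tfrac{2(1-q)}{q^2}\,\mathrm{Ent}(u^q)$ with $\mathcal E(g,h)=\langle g,Lh\rangle$ the Dirichlet form; here $-\mathcal E(u^{q-1},u)\ge 0$ automatically since $t\mapsto t^{q-1}$ is decreasing, and after substituting $v=u^{q/2}$ an elementary edgewise inequality yields $-\mathcal E(u^{q-1},u)\ge\tfrac{4(1-q)}{q^2}\,\mathcal E(v,v)$, so the claim follows from the log-Sobolev inequality $\mathrm{Ent}(v^2)\le 2\,\mathcal E(v,v)$ on the discrete cube --- the factor $2$ in the ODE for $q(t)$ matching exactly the log-Sobolev constant. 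Both routes work; I would write up the tensorization-plus-calculus one, as it is the most self-contained.
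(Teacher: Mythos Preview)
The paper does not actually prove Borell's theorem; it is quoted as a known result from \cite{Borell} and used as a black box, so there is no proof in the paper to compare your proposal against.

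That said, your proposal is a correct outline of the two standard approaches --- tensorization to a two-point inequality followed by a one-variable minimization, and the Gross-type semigroup argument via the log-Sobolev inequality on the cube. The details you sketch are accurate: the reduction to the extremal $\epsilon=\epsilon_0$ via the monotonicity of $\delta\mapsto\|T_\delta g\|_{q_1}$ for $q_1<1$ (which follows from Jensen and the semigroup property, as you indicate), the second-order expansion at the diagonal that pins down $\epsilon_0$, and the choice $q(t)=1-(1-q_2)e^{2t}$ whose derivative $q'(t)=-2(1-q(t))$ matches the log-Sobolev constant of the cube. Either route becomes a complete proof once the one remaining lemma is written out carefully --- the global (not just local) two-point minimization in the first route, or the edgewise comparison between $-\mathcal{E}(u^{q-1},u)$ and $\mathcal{E}(u^{q/2},u^{q/2})$ in the second. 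None of this is in the paper, which simply cites the result.
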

\noindent Although Borell's result dates back to 1982, it wasn't
used in the research of Boolean functions until recent years. In
the last few years, Borell's inequality was used in several
papers~\cite{Feige,Mossel-Inverse,Mossel-Arrow}; it seems to be a
useful tool that has yet to be fully developed.

\medskip

\noindent We use Borell's inequality to obtain a {\it lower bound}
on the $\epsilon$-noise correlation of Boolean functions through
the following corollary, presented in~(\cite{Mossel-Inverse},
Corollary~3.5):
\begin{theorem}\label{Thm:Inverse-Beckner1}
Let $f,g:\{0,1\}^n \rightarrow \{0,1\}$. If $\mathbb{E}[f] = p_1$,
and $\mathbb{E}[g] = p_2=p_1^{\alpha}$ for some $\alpha \geq 0$,
then for all $0 < \epsilon <1$,
\begin{equation}
\sum_{S} \epsilon^{|S|} \hat f(S) \hat g(S) \geq p_1 \cdot
p_1^{(\sqrt{\alpha}+\epsilon)^2/(1-\epsilon^2)} = p_1 \cdot
p_2^{\frac{(\sqrt{\alpha}+\epsilon)^2}{(1-\epsilon^2) \alpha}}.
\label{Eq:Inv-Beckner1}
\end{equation}
\end{theorem}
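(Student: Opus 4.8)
\medskip

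The plan is to derive the lower bound on the noise correlation $\sum_S \epsilon^{|S|}\hat f(S)\hat g(S) = \langle T_\epsilon f, g\rangle$ directly from Borell's reverse hypercontractive inequality, exploiting the fact that $f$ and $g$ are $\{0,1\}$-valued (hence nonnegative, so Borell applies). First I would rewrite the noise correlation using the semigroup property of the noise operator: since $T_\epsilon = T_{\sqrt\epsilon}\circ T_{\sqrt\epsilon}$ and $T_{\sqrt\epsilon}$ is self-adjoint, we have
\[
\langle T_\epsilon f, g\rangle = \langle T_{\sqrt\epsilon} f, T_{\sqrt\epsilon} g\rangle.
\]
Now the idea is to bound this inner product from below by a product of norms in the reverse direction. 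Concretely, for suitably chosen exponents $q_1,q_2 \le 1$ with conjugate-type relation $\tfrac1{q_1}+\tfrac1{q_2}=1$ (note these are \emph{negative} when $q_1,q_2<1$), one has a reverse Hölder inequality $\langle u,v\rangle \ge \|u\|_{q_1}\|v\|_{q_2}$ for nonnegative $u,v$. Applying this with $u = T_{\sqrt\epsilon}f$, $v = T_{\sqrt\epsilon}g$ and then Borell's inequality $\|T_{\sqrt\epsilon}f\|_{q_1}\ge \|f\|_{r_1}$, $\|T_{\sqrt\epsilon}g\|_{q_2}\ge\|g\|_{r_2}$ — valid provided $\sqrt\epsilon \le \sqrt{(r_i-1)/(q_i-1)}$ — we obtain
\[
\langle T_\epsilon f,g\rangle \ \ge\ \|f\|_{r_1}\,\|g\|_{r_2}\ =\ p_1^{1/r_1}\,p_2^{1/r_2},
\]
where the last equality uses that $f,g$ are Boolean so $\|f\|_r^r = \mathbb E[f] = p_1$ for every $r$, i.e. $\|f\|_r = p_1^{1/r}$.

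\medskip

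The heart of the argument is then the optimization over the free parameters. We have two constraints linking $q_i$ to $r_i$ (the Borell threshold conditions, which we will want to take with equality to get the best bound, giving $r_i - 1 = \epsilon(q_i-1)$) and one constraint linking $q_1,q_2$ (the reverse Hölder relation $1/q_1 + 1/q_2 = 1$). This leaves a one-parameter family; I would parametrize it and choose the parameter to maximize the exponent of $p_1$ in $p_1^{1/r_1 + \alpha/r_2}$ (using $p_2 = p_1^\alpha$), equivalently to \emph{minimize} $1/r_1 + \alpha/r_2$. Writing $q_1 = 1 - t$ for a parameter $t\in(0,1)$, the Hölder relation forces $q_2 = 1 + t/(1-t) \cdot(\text{something})$... more cleanly, $1/q_1 + 1/q_2 = 1$ with $q_1 < 1$ forces $q_2 < 0$; setting $q_1 = \beta/(\beta-1)$-type substitutions and carrying through the threshold relations $r_i = 1 + \epsilon(q_i - 1)$ should, after a direct computation, yield that the optimal choice produces exactly the exponent $(\sqrt\alpha + \epsilon)^2/(1-\epsilon^2)$ on $p_1$, i.e. the bound
\[
\langle T_\epsilon f,g\rangle \ \ge\ p_1 \cdot p_1^{(\sqrt\alpha+\epsilon)^2/(1-\epsilon^2)}.
\]
The two displayed forms of the right-hand side in~(\ref{Eq:Inv-Beckner1}) are then just algebraic rewritings using $p_1 = p_2^{1/\alpha}$.

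\medskip

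The main obstacle I anticipate is purely computational bookkeeping: keeping track of signs and directions of inequalities when the Lebesgue exponents drop below $1$ and become negative, since both the reverse Hölder inequality and Borell's theorem reverse the usual direction, and one must verify that the chosen $q_1, q_2, r_1, r_2$ actually satisfy $q_i \le 1$, the threshold inequality $\sqrt\epsilon \le \sqrt{(r_i-1)/(q_i-1)}$ (here one wants equality for optimality, but must check it is achievable with admissible exponents), and the conjugacy relation simultaneously. A secondary point to be careful about is the degenerate regime $\alpha = 0$ (i.e. $g$ not actually a power of $f$, or $p_2$ bounded away from $0$) and the boundary behavior as $\epsilon \to 1$; but since the statement only claims $0<\epsilon<1$ and $\alpha \ge 0$, and since for $\alpha = 0$ the claimed exponent is $\epsilon^2/(1-\epsilon^2)$ which stays finite, these should be handled by the same computation or by a direct limiting argument. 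Once the parameter optimization is set up correctly, the rest is a one-variable calculus exercise, and I would present it compactly rather than grinding through every line. (In fact this is exactly Corollary~3.5 of~\cite{Mossel-Inverse}, so for the paper it suffices to cite it; the sketch above records how that corollary is obtained from Borell's inequality.)
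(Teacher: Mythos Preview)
The paper does not supply a proof of this statement at all; it simply quotes it as Corollary~3.5 of~\cite{Mossel-Inverse}. Your sketch---split $T_\epsilon = T_{\sqrt\epsilon}T_{\sqrt\epsilon}$, apply the reverse H\"older inequality to the inner product of the two nonnegative functions $T_{\sqrt\epsilon}f$ and $T_{\sqrt\epsilon}g$, then push Borell's reverse hypercontractivity through each factor and optimize over the free H\"older exponent---is correct and is essentially the argument used in~\cite{Mossel-Inverse}. Indeed, carrying out your optimization explicitly (parametrize by $u = 1-q_1$, solve $dE/du = 0$) yields the optimal choice $u^\star = (\sqrt\alpha+\epsilon)/(1+\sqrt\alpha\,\epsilon)$ and the resulting exponent
\[
\frac{1}{r_1}+\frac{\alpha}{r_2}\;=\;\frac{1+2\sqrt\alpha\,\epsilon+\alpha}{1-\epsilon^2}\;=\;1+\frac{(\sqrt\alpha+\epsilon)^2}{1-\epsilon^2},
\]
exactly as claimed. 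So there is no gap; your parenthetical remark that for the purposes of the present paper a citation suffices is precisely what the paper does.
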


\noindent In the proof of Lemma~\ref{Lemma:Main}, we apply
Theorem~\ref{Thm:Inverse-Beckner1} with $\alpha \geq 1$, and write
the lower bound in the form $p_1 \cdot p_2^{\beta}$. We use a
simple observation regarding properties of
$\beta=\frac{(\sqrt{\alpha}+\epsilon)^2}{(1-\epsilon^2) \alpha}$:
\begin{observation}

\begin{itemize}

\item As a function of $\alpha=\log_{p_1} (p_2)$, $\beta$ is
monotone decreasing.

\item For all $\alpha \geq 1$, we have $\beta \leq \frac{
1+\epsilon}{1-\epsilon}$. When $\alpha \rightarrow \infty$, we
have $\beta \rightarrow \frac{1}{1-\epsilon^2}$.
\end{itemize}
\label{Obs:Inv-Beckner}
\end{observation}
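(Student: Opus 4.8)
The plan is to reduce both bullets to an elementary one-variable monotonicity fact via the substitution $t=\sqrt{\alpha}$. Writing $\beta$ as a function of $t>0$, one gets
\[
\beta=\frac{(t+\epsilon)^2}{(1-\epsilon^2)\,t^2}=\frac{1}{1-\epsilon^2}\left(1+\frac{\epsilon}{t}\right)^2 .
\]
Since $0<\epsilon<1$, the map $t\mapsto 1+\epsilon/t$ is strictly decreasing and positive on $(0,\infty)$, hence so is its square, and $t=\sqrt{\alpha}$ is strictly increasing in $\alpha$. Composing, $\beta$ is strictly decreasing as a function of $\alpha=\log_{p_1}(p_2)$, which is the first bullet. (If one prefers to bypass the substitution, differentiating $\beta$ directly with respect to $\alpha$ yields a derivative with the sign of $-\epsilon(\sqrt{\alpha}+\epsilon)<0$; the substitution just makes the computation transparent.)

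For the second bullet, monotonicity immediately gives that on the range $\alpha\ge 1$ the value of $\beta$ is maximized at $\alpha=1$, i.e. $t=1$, where
\[
\beta(1)=\frac{(1+\epsilon)^2}{1-\epsilon^2}=\frac{(1+\epsilon)^2}{(1-\epsilon)(1+\epsilon)}=\frac{1+\epsilon}{1-\epsilon},
\]
so $\beta\le\frac{1+\epsilon}{1-\epsilon}$ for all $\alpha\ge 1$. The limiting statement follows by letting $t\to\infty$ (equivalently $\alpha\to\infty$) in the displayed formula for $\beta$: the factor $(1+\epsilon/t)^2\to 1$, hence $\beta\to\frac{1}{1-\epsilon^2}$.

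There is no genuine obstacle here; the only points requiring a moment's care are the domain and signs. Theorem~\ref{Thm:Inverse-Beckner1} is invoked with $\alpha\ge 1$, so in particular $\alpha>0$ and the substitution $t=\sqrt{\alpha}$ together with all the divisions above are legitimate; and $0<\epsilon<1$ ensures $1-\epsilon^2>0$, so the normalizing factor $\frac{1}{1-\epsilon^2}$ is positive and none of the inequalities flip. This completes the plan.
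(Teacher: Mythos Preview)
Your argument is correct. The paper does not actually supply a proof for this observation; it is stated as a ``simple observation'' and left to the reader, so there is no paper proof to compare against beyond noting that your substitution $t=\sqrt{\alpha}$ and the factorization $\beta=\frac{1}{1-\epsilon^2}(1+\epsilon/t)^2$ make the claimed monotonicity, the value at $\alpha=1$, and the limit as $\alpha\to\infty$ all immediate.
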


\noindent Finally, we use the following notation:
\begin{notation}
We denote by $RHC(p_1,p_2)$ the lower bound obtained in
Theorem~\ref{Thm:Inverse-Beckner1} for $\mathbb{E}[f]=p_1$,
$\mathbb{E}[g]=p_2$, and $\epsilon=1/3$. In particular,
$RHC(p,p)=p^3$, and
\begin{equation}\label{Eq:RHC'}
RHC(1/2,p)=\frac{1}{2} \cdot
p^{\frac{9(\sqrt{\alpha}+1/3)^2}{8\alpha}},
\end{equation}
where $\alpha = \log_2 (1/p)$. For a small value of $p$, the
exponent tends to $9/8$. \label{Not:RHC}
\end{notation}

\subsection{Mossel's Quantitative Arrow Theorem}
\label{sec:sub:preliminaries-mossel}

In the proofs of Theorems~\ref{Thm:Main1} and~\ref{Thm:Main2} we
use (as a ``black box'') three major components of Mossel's proof
of his quantitative version of Arrow's
theorem~\cite{Mossel-Arrow}. The first is a quantitative Arrow
theorem for GSWFs on three alternatives:
\begin{theorem}[\cite{Mossel-Arrow}, Theorem~8.1] \label{Thm:Mossel1}
There exists an absolute constant
$C$ such that for any GSWF $F$ on three alternatives that
satisfies the IIA condition, if the probability of non-transitive
outcome in $F$ is at most
\[
\delta(\epsilon)= \exp(-C/\epsilon^{21}),
\]
then $D_1(F) \leq \epsilon$.
\end{theorem}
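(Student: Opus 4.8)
The plan is to prove a quantitatively robust version of the combinatorial argument of Barbera, following the strategy that Mossel uses. Write $F=(F_{12},F_{23},F_{13})$ for the three Boolean choice functions on $\{0,1\}^n$. Under the uniform measure on rational profiles the three preference bits of a single voter are distributed uniformly over the six transitive patterns among the eight possible ones, so the pairwise correlation of these bits is $-1/3$; hence a computation in the spirit of Kalai's formula expresses the probability of a non-transitive outcome as an affine combination of the expectations $\mathbb{E}[F_{ij}]$ and of noise correlations $\langle T_{1/3}F_{ij},G\rangle$, where $G$ is $F_{ik}$ or its dual (the dual entering only to absorb the sign of $-1/3$, via Claim~\ref{Claim:Dual1} and~(\ref{Eq:Dual2})). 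The theorem then amounts to showing that smallness of this affine combination forces $F$ to be close to a member of $\mathcal{F}_3(n)$, that is, to a near-dictatorship or to a function that is near-constant on some pair of alternatives.

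Fix an influence threshold $\tau=\tau(\epsilon)$ and split into two regimes. In the \emph{high-influence regime} some coordinate $i$ has influence at least $\tau$ on one of the functions, say $F_{12}$. Restricting to lines in direction $i$, the Condorcet constraint forces the output triple to avoid a cycle on almost every line; a stability argument (of Friedgut--Kalai--Naor type) then shows that a heavy dependence of $F_{12}$ on voter $i$ propagates, forcing $F_{23}$ and $F_{13}$ to be close to dictatorships of the \emph{same} voter $i$ with a consistent orientation, or else one of them close to a constant. Controlling the accumulated error over the boundedly many heavy coordinates gives $D_1(F)\le\epsilon$ with a polynomial relation between $\delta$ and $\epsilon$ in this regime. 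In the \emph{low-influence regime}, where every influence of every $F_{ij}$ is below $\tau$, truncate each $F_{ij}$ to degree $d=O(\log(1/\tau))$ (the tail being controlled by the Bonami--Beckner inequality) and apply the non-linear invariance principle of Mossel--O'Donnell--Oleszkiewicz to pass to three Gaussian functions whose probability of a Condorcet cycle is essentially unchanged.

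On the Gaussian side I would apply Borell's reverse hypercontractive inequality (the Gaussian statement underlying Theorem~\ref{Thm:Inverse-Beckner1}) to obtain \emph{lower} bounds on the Gaussian noise correlations, hence a lower bound on the probability of a cycle, unless each Gaussian function is close in $L^1$ to $0$ or to $1$; transported back to the cube this says that each $F_{ij}$ is close to a constant, i.e., $F$ is close to $\mathcal{F}_3(n)$. The final bound is obtained by optimizing $\tau$ against the three competing error terms --- the invariance-principle error (polynomial in $\tau$ but exponential in $d$), the degree-truncation error, and the reverse-hypercontractive loss --- and this optimization is what produces $\delta(\epsilon)=\exp(-C/\epsilon^{21})$.

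The main obstacle is precisely this optimization in the low-influence regime: the invariance principle degrades polynomially in the influence bound $\tau$ but exponentially in the truncation degree $d$, and lowering $d$ costs a further hypercontractive factor, so one is driven into an exponential, rather than polynomial, dependence of $\delta$ on $\epsilon$. The high-influence ``dictator-propagation'' step is delicate but purely combinatorial and should cost only polynomial factors. It is exactly in order to avoid the invariance principle --- and thereby to recover a polynomial $\delta(\epsilon)$ --- that the present paper replaces the low-influence analysis with the direct hypercontractive estimates of Lemma~\ref{Lemma:Main}.
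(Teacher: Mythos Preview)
The paper does not prove this statement. Theorem~\ref{Thm:Mossel1} is quoted verbatim from~\cite{Mossel-Arrow} (Theorem~8.1 there) and invoked as a black box; see Section~\ref{sec:sub:preliminaries-mossel}, where it is explicitly listed as one of ``three major components of Mossel's proof'' that the present paper uses without reproving. So there is no ``paper's own proof'' for you to compare your proposal against.

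That said, your sketch is a fair high-level summary of Mossel's actual argument in~\cite{Mossel-Arrow}: the decomposition into a high-influence regime handled by a quantitative Barbera-type combinatorial argument, and a low-influence regime handled by the non-linear invariance principle of~\cite{Maj-Stablest} together with a Gaussian version of Arrow's theorem proved via Borell's reverse hypercontractivity, with the final parameter optimization producing the $\exp(-C/\epsilon^{21})$ dependence. Your closing observation --- that the invariance-principle step is the quantitative bottleneck and that Lemma~\ref{Lemma:Main} is precisely what circumvents it in the regime where some choice function is near constant --- is exactly the point of the present paper. Note, however, that the paper still \emph{needs} Theorem~\ref{Thm:Mossel1} as a black box for the complementary regime $D_1(F)\ge 2^{-500003}$ (see the proof of Theorem~\ref{Thm4.1}), and indeed lists removing this dependence as the first open problem in Section~\ref{sec:Open-Questions}.
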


\noindent We use this theorem only in the case where $\epsilon$ is
bigger than some fixed constant. Thus, the ``bad'' dependence of
$\delta$ on $\epsilon$ affects our final result only by a constant
factor.

\medskip

\noindent The second component is a generic reduction lemma that
allows to leverage results from GSWFs on three alternatives to
GSWFs on $k$ alternatives, for all $k \geq 3$. The reduction can
be formulated as follows:
\begin{theorem}[\cite{Mossel-Arrow}, Theorem~9.1 and Remark~9.2]
\label{Thm:Leveraging} Suppose that there exists
$\delta_0(\epsilon)$ such that for any GSWF $F$ on three
alternatives that satisfies the IIA condition, if the probability
of non-transitive outcome in $F$ is at most $\delta_0(\epsilon)$,
then $D_1(F) \leq \epsilon$. Then we have the following
quantitative Arrow theorem for GSWFs on $k$ alternatives:

\noindent For any GSWF $F$ on $k$ alternatives that satisfies the
IIA condition, if the probability of non-transitive outcome in $F$
is at most
\[
\delta(\epsilon)=\delta_0 \left(\epsilon/k^2 \right),
\]
then $D_1(F) \leq \epsilon$.
\end{theorem}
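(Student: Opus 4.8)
The plan is to reduce from the $k$-alternative case to the three-alternative case one triple of alternatives at a time, and then to glue the resulting approximations together. First I would dispose of the trivial range $\epsilon\ge 1$: the constant GSWF that always outputs the linear order $1<2<\cdots<k$ lies in $\mathcal{F}_k(n)$ and is at distance at most $1$ from $F$, so $D_1(F)\le 1\le\epsilon$. Hence assume $\epsilon<1$ (and $k\ge 3$, the claim being vacuous otherwise), and let $F$ be a GSWF on $k$ alternatives satisfying IIA whose probability of a non-transitive outcome is at most $\delta(\epsilon)=\delta_0(\epsilon/k^2)$; write $\{F_{ij}\}_{1\le i<j\le k}$ for its choice functions.

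Fix a triple $T=\{i,j,l\}$ of alternatives. Restricting each voter's linear order on $\{1,\ldots,k\}$ to $T$ and keeping the choice functions $F_{ij},F_{jl},F_{il}$ (which by IIA depend only on these restricted orders) defines a GSWF $F^T$ on three alternatives satisfying IIA. Since the restriction of a uniform random linear order on $\{1,\ldots,k\}$ to $T$ is a uniform random linear order on $T$, independently for the different voters, $F^T$ has the same distribution as the corresponding coordinates of $F$; moreover a non-transitive outcome of $F^T$ is precisely a directed $3$-cycle on $T$ in the outcome of $F$, which forces a non-transitive outcome of $F$. Therefore the probability of a non-transitive outcome of $F^T$ is at most $\delta_0(\epsilon/k^2)$, and the hypothesis, applied with parameter $\epsilon/k^2$, produces $G^T\in\mathcal{F}_3(n)$ with $\Pr[F^T\neq G^T]\le\epsilon/k^2$.

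The gluing step is where I expect the main difficulty to lie, since it is the only place where structural information beyond the hypothesis is needed. Here I would invoke the characterization of $\mathcal{F}_3(n)$ recalled in Section~\ref{sec:sub:preliminaries-mossel}, whose relevant consequence is that every choice function of a member of $\mathcal{F}_3(n)$ depends on at most one coordinate, i.e.\ is one of $0$, $1$, $x_d$, $1-x_d$. Any two \emph{distinct} functions of this type are at distance at least $1/2$. Now fix a pair $(i,j)$ and two triples $T,T'$ both containing it: $G^T_{ij}$ and $G^{T'}_{ij}$ are each at distance at most $\epsilon/k^2$ from $F_{ij}$, hence at distance at most $2\epsilon/k^2<1/2$ from each other (using $\epsilon<1$ and $k\ge 3$), and therefore coincide. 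Thus $G_{ij}:=G^T_{ij}$ is well defined independently of the third alternative, and I set $G=(G_{ij})_{1\le i<j\le k}$.

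It remains to verify that $G\in\mathcal{F}_k(n)$ and that $\Pr[F\neq G]\le\epsilon$. For the first point, on every triple $T$ the three choice functions of $G$ coincide with those of $G^T\in\mathcal{F}_3(n)$, so the outcome of $G$ never contains a directed $3$-cycle on any triple; a tournament with no $3$-cycle is transitive, so the outcome of $G$ is always a linear order. For the second point, for each pair $(i,j)$ choose a triple $T\ni\{i,j\}$ and use $\Pr[F_{ij}\neq G_{ij}]=\Pr[F^T_{ij}\neq G^T_{ij}]\le\Pr[F^T\neq G^T]\le\epsilon/k^2$; a union bound over pairs then gives
\[
\Pr[F\neq G]\le\sum_{1\le i<j\le k}\Pr[F_{ij}\neq G_{ij}]\le\binom{k}{2}\cdot\frac{\epsilon}{k^2}<\epsilon ,
\]
so $D_1(F)\le\epsilon$, as required.
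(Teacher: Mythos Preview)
Your gluing step contains a genuine error. You claim that ``every choice function of a member of $\mathcal{F}_3(n)$ depends on at most one coordinate, i.e.\ is one of $0$, $1$, $x_d$, $1-x_d$,'' but Theorem~\ref{Thm:Mossel3} says otherwise: if $G^T\in\mathcal{F}_3(n)$ has a block of size two, say $\{i,j\}$, then $G^T_{il}$ and $G^T_{jl}$ are constant, but $G^T_{ij}$ may be an \emph{arbitrary} non-constant Boolean function (for instance, majority). Hence for two triples $T,T'$ both containing $\{i,j\}$ the functions $G^T_{ij}$ and $G^{T'}_{ij}$ can both be within $\epsilon/k^2$ of $F_{ij}$ yet fail to coincide, and your distance-$\tfrac12$ rigidity argument does not apply. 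Without this coincidence $G_{ij}$ is not well defined; and if you instead fix $G_{ij}$ from one particular triple, you lose the guarantee that the restriction of $G$ to an \emph{arbitrary} triple lies in $\mathcal{F}_3(n)$, so the transitivity check breaks.

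Note that the present paper does not give its own proof of this statement: it is quoted from~\cite{Mossel-Arrow} and used as a black box, so there is no in-paper argument to compare against. Your overall architecture (restrict to triples, apply the three-alternative hypothesis, glue, union bound over the $\binom{k}{2}$ pairs) is the right shape and matches the spirit of the reduction, but the gluing requires a more careful structural analysis. One has to exploit that whenever $G^T_{ij}$ is ``arbitrary'' the other two choice functions in that triple are constant, and use this to build a consistent global ordered partition of the $k$ alternatives; inside a size-two block one is then free to take $G_{ij}=F_{ij}$ (or any non-constant function), which sidesteps the non-uniqueness. Without some argument of this kind the proof is incomplete.
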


\medskip

\noindent The third component is a complete characterization of
the set $\mathcal{F}_k(n)$ of GSWFs on $k$ alternatives that
satisfy the IIA condition and whose output is always transitive.
Though we use in our proof only the characterization of
$\mathcal{F}_3(n)$, the result is presented here for a general $k$
for the sake of completeness.
\begin{theorem}[\cite{Mossel-Arrow}, Theorem~1.2]
\label{Thm:Mossel3}
The class $\mathcal{F}_k(n)$ consists exactly
of all GSWFs $F$ satisfying the following: There exists a
partition of the set of alternatives into disjoint sets
$A_1,A_2,\ldots,A_r$ such that:
\begin{itemize}
\item For any profile, $F$ ranks all the alternatives in $A_i$
above all the alternatives in $A_j$, for all $i<j$.

\item For all $s$ such that $|A_s| \geq 3$, the restriction of $F$
to the alternatives in $A_s$ is a dictatorship (i.e., is equal
either to the preference order of some voter $j$ or to the reverse
of such order).

\item For all $s$ such that $|A_s|=2$, the restriction of $F$ to
the alternatives in $A_s$ is an arbitrary non-constant function of
the individual preferences between the two alternatives in $A_s$.
\end{itemize}
\end{theorem}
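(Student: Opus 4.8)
The plan is to prove both inclusions, with the non-trivial direction reduced to the three-alternative case of Arrow's theorem in structural form. For the easy inclusion I would take any $F$ of the stated form and verify directly that its output is always a linear order: cross-block comparisons follow the fixed total order $A_1,\dots,A_r$, comparisons inside a block of size $\ge 3$ agree with some voter's order (or its reverse) and hence linearly order that block, and comparisons inside a block of size $2$ are a single bit. A putative directed cycle in the social order would have to strictly increase the block index at every cross-block step, so it must lie inside a single block, where no cycle exists; hence $F\in\mathcal F_k(n)$.

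For the converse I would first establish the theorem when $k=3$. Its heart is: if $F$ is an IIA GSWF on three alternatives $\{a,b,c\}$ with always-transitive output in which all three choice functions $F_{ab},F_{bc},F_{ac}$ are non-constant, then there is a single voter $v$ such that either each of the three choice functions equals $v$'s opinion on the corresponding pair, or each equals the negation of $v$'s opinion. This is essentially Arrow's theorem in structural form and is the step where the real work lies; I would derive it either from Barbera's combinatorial argument or from Kalai's Fourier-analytic identity for the probability of a non-transitive outcome on three alternatives, which forces the relevant Fourier weight to sit on a single coordinate. The remaining sub-cases of $k=3$ --- where at least one choice function is constant --- reduce immediately to two alternatives and produce exactly the partitions with a split-off alternative. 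Once $k=3$ is in hand, everything else is bookkeeping carried out on triples.

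Now fix $F\in\mathcal F_k(n)$ for general $k$ and form the graph on the alternatives that joins $i$ and $j$ whenever $F_{ij}$ is non-constant; let $A_1,\dots,A_r$ be its connected components. The key subclaim, obtained by applying the $k=3$ result to the triple $\{a,b,c\}$, is: if $F_{ab}$ is non-constant and $a$ is always socially preferred to $c$, then $b$ is always socially preferred to $c$ (the three-alternative restriction cannot be a (reverse-)dictatorship since $F_{ac}$ is constant, so it is of the split-off type $\{a,b\},\{c\}$, which forces this); symmetric statements hold with the roles and the direction interchanged. Chaining the subclaim along paths shows that each component is comparable, as a whole and in a single direction, to every alternative outside it, so between any two components all comparisons go one way; one more application to a triple shows this block-dominance relation is transitive, hence a total order, which I use to label the components $A_1,\dots,A_r$.

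It remains to analyze a single component $A_s$. A path-shortening argument using the subclaim (if $x_0\!-\!x_1\!-\!x_2$ are consecutive on a non-constant path then $F_{x_0x_2}$ is non-constant too, else the subclaim contradicts non-constancy of one of the two edges) shows that every pair inside $A_s$ already carries a non-constant choice function. Hence if $|A_s|\ge 3$, every triple inside $A_s$ meets the hypothesis of the $k=3$ result and is a (reverse-)dictatorship of some voter; for two triples sharing a pair, the shared choice function is simultaneously one voter's opinion and another's, which forces the two voters and their directions to coincide, so a single voter $v$ and a single direction work throughout $A_s$, i.e.\ $F$ restricted to $A_s$ is the dictatorship of $v$ or its reverse. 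Components of size $2$ carry a non-constant choice function and no further constraint, and components of size $1$ are single alternatives, all of whose comparisons are constant --- precisely the asserted description. The main obstacle is genuinely the base case $k=3$; after that, the only delicate point is tracking the dictator/anti-dictator direction consistently across overlapping triples.
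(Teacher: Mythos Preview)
The paper does not prove this theorem at all: Theorem~\ref{Thm:Mossel3} is quoted in Section~\ref{sec:sub:preliminaries-mossel} as a black-box result from Mossel's paper~\cite{Mossel-Arrow} (his Theorem~1.2), alongside Theorems~\ref{Thm:Mossel1} and~\ref{Thm:Leveraging}, and is used only as an input to the proofs of Theorems~\ref{Thm4.1} and~\ref{Thm4.2}. So there is no ``paper's own proof'' to compare your proposal against.

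That said, your outline is a sound reconstruction of how such a characterization is typically proved (and, as far as one can tell, is close in spirit to Mossel's argument, which in turn builds on Wilson~\cite{Wilson} and Barbera~\cite{Barbera}). The easy inclusion is fine. For the hard inclusion your reduction to $k=3$ via the non-constant graph, the subclaim that a constant comparison propagates along non-constant edges, the path-shortening that makes each component a clique of non-constant comparisons, and the overlapping-triples argument pinning down a single dictator and direction per large component are all correct and standard. Two minor points worth tightening: first, you defer the entire $k=3$ base case to ``Barbera's combinatorial argument or Kalai's Fourier identity''; this is honest, but be aware that Kalai's identity alone does not immediately give the structural statement without an additional step (e.g.\ a Friedgut--Kalai--Naor-type conclusion identifying a single influential coordinate), so Barbera's route is the cleaner citation here. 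Second, in the consistency step you should note explicitly that any two triples inside a component of size $\geq 4$ are linked by a chain of triples sharing a pair, which is what lets you globalize the dictator; you use this implicitly but it deserves one sentence.
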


\section{Proof of the Main Lemma}
\label{sec:Lemma}

In this section we prove our main lemma, asserting that if $F$ is
a GSWF on three alternatives that satisfies the IIA condition, and
at least one of the Boolean choice functions $F_{12},F_{23},$ and
$F_{31}$ is ``close enough'' to a constant function, then the
probability of non-transitive outcome can be bounded from below in
terms of $D_1(F)$ and $D_2(F)$. Throughout this section we use the
following notations:
\begin{notation}
The Boolean choice functions $F_{12},F_{23},$ and $F_{31}$ are
denoted by $f,g,$ and $h$, respectively. This means that the
preferences of the voters between alternatives $1$ and $2$ are
denoted by a vector $(x_1,x_2,\ldots,x_n) \in \{0,1\}^n$, where
$x_k=1$ if the $k$'th voter prefers alternative $1$ over
alternative $2$, and $x_k=0$ otherwise. Then,
$f(x_1,\ldots,x_n)=1$ if in the output of $F$, alternative $1$ is
preferred over alternative $2$, and $f(x_1,\ldots,x_n)=0$
otherwise. The functions $g$ and $h$ are defined similarly with
respect to the pairs of alternatives $(2,3)$ and $(3,1)$. The
expectations of the choice functions are denoted by
\[
\mathbb{E}[f]=p_1, \qquad \mathbb{E}[g]=p_2, \qquad
\mathbb{E}[h]=p_3.
\]
For each $i$, we denote $\overline{p}_i=\min(p_i,1-p_i)$, and let
\[
D'_2(F)=\min_{1 \leq i \leq 3} \overline{p}_i.
\]
Note that $D'_2(F)$ measures the distance of the Boolean choice
functions of $F$ from the family of constant functions. Finally,
the probability of non-transitive outcome is denoted by $P(F)$.
\end{notation}

\noindent Now we can formulate our main lemma:
\begin{lemma}\label{Lemma:Main}
Let $F$ be a GSWF on three alternatives satisfying the IIA
condition. If $D'_2(F) \leq 2^{-500000}$, then
\[
P(F) \geq \frac{1}{10} \cdot \max
\Big(RHC(D_1(F)/2,D_1(F)/2),RHC(D'_2(F),1/2) \Big),
\]
where $D_1(F)$ is as defined in the introduction and
$RHC(\cdot,\cdot)$ is as defined in Notation~\ref{Not:RHC}.
\end{lemma}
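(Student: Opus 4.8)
The plan is to start from a Kalai-type Fourier formula for the probability of a non-transitive outcome. Since $F$ satisfies IIA on three alternatives, the three Boolean functions $f,g,h$ act on the three relevant preference-bit vectors, and the event ``$F$ is non-transitive'' is the event that the induced tournament on $\{1,2,3\}$ is a $3$-cycle (in either orientation). On a profile, the three relevant bits of a single voter are \emph{not} independent: a voter's linear order on $\{1,2,3\}$ forbids exactly the two cyclic patterns, so each coordinate triple is uniform on the $6$ transitive patterns. Writing the indicator of a $3$-cycle of the outputs as a low-degree polynomial in the outputs $f,g,h$ and taking expectations, one gets $P(F)$ as a constant plus a linear combination of terms of the form $\langle T_{1/3} u, v\rangle$ for $u,v\in\{f,g,h,f',g',h'\}$ (the $1/3$ arises because, conditioned on a voter preferring $1$ over $2$, the sign of that voter's preference between $2$ and $3$ has correlation $\pm1/3$ with the corresponding bit). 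This is exactly the ``modification of Kalai's formula'' promised in the introduction, and Section~\ref{sec:Lemma} presumably opens by deriving it. After using Claim~\ref{Claim:Dual1} and~(\ref{Eq:Dual2}) to replace dual functions, one is left with an identity
\[
P(F) = (\text{explicit constant in } p_1,p_2,p_3) \;\pm\; (\text{sum of a few noise correlations } \langle T_{1/3} u,v\rangle).
\]

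The second step is the heart of the lemma. Assume WLOG that $h$ is the function closest to a constant, i.e.\ $\overline p_3 = D_2'(F)$, and (again WLOG, replacing $h$ by $1-h$ if necessary, which only relabels the cycle orientations) that $p_3 = \overline p_3 \le 2^{-500000}$ is small. Now I would bound the noise-correlation terms from two sides. For the terms that should be \emph{small}, I apply Proposition~\ref{Prop:Upper-Bound1}: any term $\langle T_{1/3} u,v\rangle$ in which one of $u,v$ is $h$ or $h'$ is at most $\min(p_1^{0.9}p_3^{0.5}, p_1^{0.75}p_3^{0.75})$-type, hence is dominated by a small power of $p_3 = D_2'(F)$ (and similarly any term is small if it involves a function close to constant on the relevant side). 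For the terms that should be \emph{large} — the one or two ``diagonal'' correlations $\langle T_{1/3} h, h\rangle$ or $\langle T_{1/3} h, g\rangle$ (after possibly passing to duals) — I apply Theorem~\ref{Thm:Inverse-Beckner1}: since $\mathbb{E}[h]=p_3$ and the second function has expectation $p_3^\alpha$ with $\alpha = \log_{p_3}(\mathbb{E}[\text{other}])\ge 1$, the correlation is at least $RHC(p_3,\mathbb{E}[\text{other}])$, and by Observation~\ref{Obs:Inv-Beckner} this is increasing as the other expectation grows, so it is minimized when the other function is as extreme as possible. Pairing each positive lower-bounded term against the small negatively-contributing terms, and using that $p_3$ is astronomically small so that the $RHC$ lower bound (exponent close to $9/8$, or $3$ in the equal case) beats the upper bounds of the form $p_3^{0.9+\cdots}$ times bounded quantities, I extract $P(F)\ge \tfrac1{10} RHC(D_2'(F),1/2)$. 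To get the other half, $P(F)\ge \tfrac1{10} RHC(D_1(F)/2,D_1(F)/2) = \tfrac1{10}(D_1(F)/2)^3$, I use Theorem~\ref{Thm:Mossel3}: the closest always-transitive GSWF to $F$ is, on three alternatives, either a constant-combined-with-dictatorship or has some $A_s$ of size $2$; in the regime where $h$ is very close to constant, the nearest $G\in\mathcal F_3(n)$ that ``fixes'' the bad pair $(3,1)$ by making $h$ constant disagrees with $F$ on a set of probability at most $\overline p_3$ on that pair, and comparable quantities on the other pairs, so $D_1(F)$ is controlled by $\overline p_3$ together with the distances of $f,g$ from constants/dictatorships — and those other distances, if large, again force $P(F)$ to be large via the same noise-correlation machinery. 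Taking the maximum over the two bounds gives the lemma.

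The main obstacle I expect is the bookkeeping in step two: one must (i) correctly enumerate which of the $\le 6$ noise-correlation terms in the Kalai formula carry which sign, (ii) reduce to a normal form via the dual relations~(\ref{Eq:Dual1})–(\ref{Eq:Dual2}) — note that passing to a dual flips the sign of odd-degree Fourier mass, which is precisely why $\langle T_{1/3}u,v\rangle$ and $\langle T_{1/3}\bar u,v\rangle$ can have opposite signs and must be handled with care — and (iii) verify that, with the explicit constants ($0.9, 0.75$ from Bonami–Beckner at $\epsilon=1/3$ versus the $RHC$ exponent $\tfrac{9(\sqrt\alpha+1/3)^2}{8\alpha}$), the positive term genuinely dominates the sum of the negative terms by at least a factor $10$ once $D_2'(F)\le 2^{-500000}$. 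The threshold $2^{-500000}$ is clearly chosen so that all the ``bounded'' factors (like $p_1^{0.9}$, constants from the formula, the factor relating $D_1$ to $\overline p_3$) are swamped; making this precise is a finite but delicate computation. A secondary subtlety is the reduction ``WLOG $h$ is the closest to constant and $p_3$ small'' — one should check that the Kalai formula is symmetric enough under permuting $\{1,2,3\}$ and under $h\mapsto 1-h$ (which corresponds to reversing one comparison, swapping the two cycle orientations) that no case is lost.
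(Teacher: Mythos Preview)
Your high-level strategy is right --- start from Kalai's formula, rewrite it as a signed combination of noise correlations $\langle T_{1/3} u, v\rangle$, then pit Borell's reverse-hypercontractive lower bound on the positive terms against Bonami--Beckner upper bounds on the negative ones. But several of the concrete steps you sketch would not work as stated, and the paper's mechanism differs from yours in an essential way.

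First, there are no ``diagonal'' terms $\langle T_{1/3} h, h\rangle$ in Kalai's formula: the three correlation terms are the cross-terms $\hat f\hat g$, $\hat g\hat h$, $\hat h\hat f$. So the lower bound cannot come from a self-correlation of the small function.

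Second --- and this is the crux --- the paper's modification of Kalai's formula (via~(\ref{Eq:Dual1})--(\ref{Eq:Dual2})) makes the free constant \emph{vanish exactly}, leaving, e.g.\ in Case~1, $P(F) = \langle T_{1/3} f', g\rangle + \langle T_{1/3}\bar g, 1-h\rangle - \langle T_{1/3} f', 1-h\rangle$. Here $f$ is the function with $\overline p_1 = D_2'(F)$ tiny. The positive term lower-bounded via Borell is $\langle T_{1/3}\bar g, 1-h\rangle$, which does \emph{not} involve the small function at all; the negative term $\langle T_{1/3} f', 1-h\rangle$ is the one containing $f'$, and it is small by Bonami--Beckner precisely because of the extra factor $p_1^{0.9}$ (or $p_1^{0.75}$). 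Your plan has this backwards: you propose to lower-bound a term containing the small function, which would give only $RHC(\overline p_3,\cdot)$ and would not beat the competing upper bounds with any room to spare.

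Third, the bound on $D_1(F)$: fixing only one choice function to a constant does not produce an element of $\mathcal F_3(n)$. The paper sets \emph{two} choice functions to constants (so one alternative is always at the top or bottom), obtaining $D_1(F)\le 2(1-p_3)$ (or $2(1-p_2)$). This quantity $1-p_3$ is exactly the expectation appearing in the dominant positive correlation above, which is why $RHC(D_1(F)/2,D_1(F)/2)=(D_1(F)/2)^3$ falls out directly --- no appeal to Theorem~\ref{Thm:Mossel3} or to distances of $f,g$ from dictatorships is needed here.

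Finally, a single WLOG reduction does not suffice. The paper runs a genuine case split (Case~1: $p_1,p_2\le 1/2$; Case~2: $p_1\le 1/2\le p_2$; each with sub-cases according to the size of $1-p_3$ or $\overline p_3$ relative to a power of $p_1$), and in each sub-case a \emph{different} algebraic modification of~(\ref{Eq3.1}) is used so that the dominant positive term changes. The interplay between $\overline p_2$ and $\overline p_3$ determines which correlation is the ``main'' one, and this is where the threshold $2^{-500000}$ does its work.
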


\begin{proof}
Our starting point is Kalai's formula~\cite{Kalai-Choice} for the probability of
non-transitive outcome in a GSWF on three alternatives satisfying the IIA
condition:
\begin{align}\label{Eq3.1}
\begin{split}
P(F) =& p_1 p_2 p_3 + (1-p_1)(1-p_2)(1-p_3) + \\
&+ \sum_{S \neq
\emptyset} (-\frac{1}{3})^{|S|} \hat{f}(S) \hat{g} (S) + \sum_{S \neq
\emptyset} (-\frac{1}{3})^{|S|} \hat{g}(S) \hat{h} (S) + \sum_{S \neq
\emptyset} (-\frac{1}{3})^{|S|} \hat{h}(S) \hat{f} (S).
\end{split}
\end{align}

\noindent The proof is divided into several cases, and in each case we use a
different modification of Formula~(\ref{Eq3.1}). In the
following, we assume w.l.o.g. that $\overline{p}_1 \leq
\overline{p}_2 \leq \overline{p}_3$. Moreover, we assume w.l.o.g. that
$p_1 \leq 1/2$, since otherwise we can
replace $f,g,h$ by the dual functions without changing the value of the
right hand side of~(\ref{Eq3.1}).

\subsection{Case 1: $p_1,p_2 \leq 1/2$}
\label{sec:sub:Same-Direction}

First, we note that if $p_3 \leq 1/2$, then the assertion follows
easily from Kalai's Formula~(\ref{Eq3.1}). Indeed, since by
assumption we have $p_1<2^{-500000}$, it follows that
$(1-p_1)(1-p_2)(1-p_3) \geq \frac14 (1-2^{-500000})$. On the other
hand, by the Parseval identity and the Cauchy-Schwarz inequality,
we have
\[
|\sum_{S \neq \emptyset} (-\frac{1}{3})^{|S|} \hat{f}(S) \hat{g}
(S)| \leq \frac13 \Big( \langle f,g \rangle - p_1 p_2 \Big) \leq
\frac13 \cdot \frac12 \cdot 2^{-250000},
\]
and similarly,
\[
|\sum_{S \neq \emptyset} (-\frac{1}{3})^{|S|} \hat{g}(S) \hat{h}
(S)| \leq \frac13 \cdot \frac 12 \cdot \frac12, \qquad \mbox{and}
\qquad |\sum_{S \neq \emptyset} (-\frac{1}{3})^{|S|} \hat{h}(S)
\hat{f} (S)| \leq \frac13 \cdot \frac12 \cdot 2^{-250000}.
\]
Thus, by Formula~(\ref{Eq3.1}),
\[
P(F) \geq \frac14 (1-2^{-500000}) - \frac{1}{12} - 2 \cdot \frac13 \cdot
\frac12 \cdot 2^{-250000} > 1/10.
\]
Therefore, we can assume that $p_3 \geq 1/2$. Note that in this
case, we have
\begin{equation}\label{Eq3.2}
D_1(F) \leq 2(1-p_3).
\end{equation}
Indeed, define a GSWF $G$ on three alternatives by the choice
functions:
\[
f'=G_{12}=0 \mbox{ (constant) }, \qquad g'=G_{23}=g, \qquad
h'=G_{31}=1 \mbox{ (constant) }.
\]
It is clear that $G \in \mathcal{F}_3(n)$, since $G$ always ranks
alternative $1$ at the bottom and thus its output is always
transitive, and
\[
\Pr[F \neq G] \leq \Pr[f \neq f']+ \Pr[g \neq g']+\Pr[h \neq h']
\leq p_1 + (1-p_3) \leq 2(1-p_3).
\]
Therefore, $D_1(F) \leq D(F,G) \leq 2(1-p_3)$. Also, by the
definition,
\begin{equation}\label{Eq3.3}
D'_2(F)= p_1.
\end{equation}

\medskip

\noindent We modify Formula~(\ref{Eq3.1}) using the following identities:
\begin{equation}
\sum_{S \neq \emptyset} (-\frac{1}{3})^{|S|} \hat{f}(S) \hat{g} (S) = \sum_{S
\neq \emptyset} (\frac{1}{3})^{|S|} \hat{f}'(S) \hat{g} (S) =
\langle T_{1/3} f' , g \rangle - p_1 p_2.
\label{Eq:Modification1.1}
\end{equation}
\begin{align}\label{Eq:Modification1.2}
\begin{split}
\sum_{S \neq \emptyset} (-\frac{1}{3})^{|S|} \hat{g}(S) \hat{h} (S) &=
\sum_{S \neq \emptyset}(-\frac{1}{3})^{|S|} \widehat{\bar{g}}(S)
\widehat{\bar{h}} (S) = \sum_{S \neq \emptyset}(\frac{1}{3})^{|S|}
\widehat{\bar{g}}(S) \widehat{1-h} (S) \\
&= \langle T_{1/3} \bar{g} , 1-h \rangle - (1-p_2)(1-p_3).
\end{split}
\end{align}
\begin{align}\label{Eq:Modification1.3}
\begin{split}
\sum_{S \neq \emptyset} (-\frac{1}{3})^{|S|} \hat{h}(S) \hat{f} (S) &= \langle T_{1/3} f' ,
h \rangle - p_1 p_3 = \langle T_{1/3} f' , 1 \rangle - \langle
T_{1/3} f' , 1-h \rangle - p_1 p_3 \\
&= p_1 - p_1 p_3 - \langle T_{1/3} f' , 1-h \rangle.
\end{split}
\end{align}
All three identities follow immediately from basic properties
of the dual function (e.g., Equations~(\ref{Eq:Dual1}) and~(\ref{Eq:Dual2})) and the
Parseval identity. Substituting
Equations~(\ref{Eq:Modification1.1}),~(\ref{Eq:Modification1.2}),
and~(\ref{Eq:Modification1.3}) into Formula~(\ref{Eq3.1}), we
get:
\begin{align}\label{Eq:Modification1.4}
\begin{split}
P(F) =& p_1 p_2 p_3 + (1-p_1)(1-p_2)(1-p_3) + \\
&+ \langle T_{1/3} f' , g \rangle - p_1 p_2 + \langle T_{1/3} \bar{g} , 1-h \rangle
- (1-p_2)(1-p_3) + p_1 - p_1 p_3 - \langle T_{1/3} f' , 1-h \rangle  = \\
=& \langle T_{1/3} f' , g \rangle + \langle T_{1/3} \bar{g} , 1-h \rangle -
\langle T_{1/3} f' , 1-h \rangle.
\end{split}
\end{align}
Equation~(\ref{Eq:Modification1.4}) expresses $P(F)$ as a linear
combination of ``noise correlations'' between the functions
$f,g,h$, which are obviously nonnegative. Thus, if we obtain a
lower bound on the noise correlations that appear in
Equation~(\ref{Eq:Modification1.4}) with a `+' sign, and an upper
bound on the correlation that appears with a `-' sign, we will get
a lower bound on $P(f)$. We shall obtain these bounds using the
Bonami-Beckner hypercontractive inequality and Borell's reverse
hypercontractive inequality. We subdivide our case into two
sub-cases.

\subsubsection{Case 1a: $1-p_3<1/32$.}

We bound $\langle T_{1/3} f' , 1-h \rangle$ from above
using the Bonami-Beckner hypercontractive inequality. By
Proposition~\ref{Prop:Upper-Bound1}, we get:
\begin{equation}
\langle T_{1/3} f' , 1-h \rangle \leq p_1^{0.75} (1-p_3)^{0.75}
\leq (1-p_3)^{1.5}. \label{Eq:Modification1.5}
\end{equation}
We bound $\langle T_{1/3} \bar{g} , 1-h \rangle$ from below using
Borell's reverse hypercontractive inequality. By
Theorem~\ref{Thm:Inverse-Beckner1}, we have
\[
\langle T_{1/3} \bar{g} , 1-h \rangle \geq RHC(1-p_2,1-p_3).
\]
In order to estimate $RHC(1-p_2,1-p_3)$, we write it in the form
$(1-p_2) \cdot (1-p_3)^{\beta(\alpha)}$, where $\alpha =
\log_{1-p_2} (1-p_3)$. By Observation~\ref{Obs:Inv-Beckner},
$\beta(\alpha)$ is a monotone decreasing function of $\alpha$.
Since by assumption, $1-p_2>31/32$ and $1-p_3<1/32$, we have
$\alpha \geq \log_{31/32} (1/32) = 109.16$. Substituting the value
$\alpha=109.16$ into the definition of $\beta(\alpha)$ and using
the monotonicity of $\beta(\alpha)$, we get $\beta \leq 1.198$,
and thus,
\begin{equation}
\langle T_{1/3} \bar{g} , 1-h \rangle \geq (1-p_2) (1-p_3)^{1.198}
\geq \frac{31}{32} (1-p_3)^{1.198}. \label{Eq:Modification1.6}
\end{equation}
Combining Inequalities~(\ref{Eq:Modification1.5})
and~(\ref{Eq:Modification1.6}), we get
\begin{align}\label{Eq:Modification1.7}
\begin{split}
\langle T_{1/3} f' , 1-h \rangle \leq (1-p_3)^{1.5} &=
\Big(\frac{32}{31} (1-p_3)^{0.302} \Big) \Big(\frac{31}{32}
(1-p_3)^{1.198} \Big) \\
&\leq \Big(\frac{32}{31} (\frac{1}{32})^{0.302} \Big) \langle
T_{1/3} \bar{g} , 1-h \rangle \leq 0.37 \langle T_{1/3} \bar{g} ,
1-h \rangle.
\end{split}
\end{align}
Finally, substituting into Equation~(\ref{Eq:Modification1.4}) we
get:
\begin{align}
\begin{split}
P(F) &= \langle T_{1/3} f' , g \rangle + \langle T_{1/3}
\bar{g} , 1-h \rangle - \langle T_{1/3} f' , 1-h \rangle \\
&\geq \langle T_{1/3} \bar{g} , 1-h \rangle - \langle T_{1/3} f' ,
1-h \rangle \geq 0.63 \langle T_{1/3} \bar{g} , 1-h \rangle \\
&\geq 0.63 RHC(1-p_2,1-p_3) \geq 0.63 \max
\left(RHC(1-p_3,1-p_3), RHC(1/2,p_1) \right),
\end{split}
\end{align}
where the last inequality holds since $1-p_2 \geq 1/2 \geq 1-p_3
\geq p_1$ and since $RHC(\cdot,\cdot)$ is clearly non-decreasing
in its arguments. The assertion of the lemma follows now from
Inequalities~(\ref{Eq3.2}) and~(\ref{Eq3.3}).

\subsubsection{Case 1b: $1-p_3 \geq 1/32$.}

As in the previous case, we bound $\langle T_{1/3} f' , 1-h
\rangle$ from above using the Bonami-Beckner hypercontractive
inequality. By Proposition~\ref{Prop:Upper-Bound1}, we get:
\begin{equation}
\langle T_{1/3} f' , 1-h \rangle \leq p_1^{0.9} (1-p_3)^{0.5} \leq
(1-p_3)^{4.1} \label{Eq:Modification1.8},
\end{equation}
where the last inequality follows since by assumption $p_1 \leq
2^{-500000}$, and in particular, $p_1 \leq (1-p_3)^4$. In order to
bound $\langle T_{1/3} \bar{g} , 1-h \rangle$ from below we use
the reverse hypercontractive inequality. By
Theorem~\ref{Thm:Inverse-Beckner1}, we have
\[
\langle T_{1/3} \bar{g} , 1-h \rangle \geq RHC(1-p_2,1-p_3).
\]
As in the previous case, we write $RHC(1-p_2,1-p_3)$ in the form
$(1-p_2) \cdot (1-p_3)^{\beta(\alpha)}$. Since by
Observation~\ref{Obs:Inv-Beckner}, for any $\alpha \geq 1$, we have
$\beta(\alpha) \leq \frac{1+\epsilon}{1-\epsilon} =2$, we get
\begin{equation}
\langle T_{1/3} \bar{g} , 1-h \rangle \geq (1-p_2)(1-p_3)^2 \geq
0.5(1-p_3)^2. \label{Eq:Modification1.9}
\end{equation}
Combination of Equations~(\ref{Eq:Modification1.8}) and
(\ref{Eq:Modification1.9}) yields
\begin{align*}
\langle T_{1/3} f' , 1-h \rangle \leq (1-p_3)^{4.1} &=
\Big(2(1-p_3)^{2.1} \Big) \Big(0.5(1-p_3)^2 \Big)
\\ &\leq 2(1-p_3)^{2.1} \langle T_{1/3} \bar{g} , 1-h \rangle \leq 0.5
\langle T_{1/3} \bar{g} , 1-h \rangle,
\end{align*}
where the last inequality follows since $1-p_3 \leq 1/2$. Finally,
\begin{align}
\begin{split}
P(F) &= \langle T_{1/3} f' , g \rangle + \langle T_{1/3}
\bar{g} , 1-h \rangle - \langle T_{1/3} f' , 1-h \rangle \\
&\geq \langle T_{1/3} \bar{g} , 1-h \rangle - \langle T_{1/3} f' , 1-h
\rangle \geq 0.5 \langle T_{1/3} \bar{g} , 1-h \rangle \geq 0.5
RHC(1-p_2,1-p_3) \\
&\geq 0.5 \max \left( RHC(1-p_3,1-p_3),RHC(1/2,p_1) \right),
\end{split}
\end{align}
as asserted. This completes the proof of Case~1.

\subsection{Case 2: $p_1 \leq 1/2$ and $p_2 \geq 1/2$}
\label{sec:sub:Opposite-Directions}

In this case, we have
\begin{equation}\label{Eq3.4}
D_1(F) \leq 2(1-p_2),
\end{equation}
since defining a GSWF $G'$ on three alternatives by the choice
functions:
\[
f''=G'_{12}=0 \mbox{ (constant) }, \qquad g''=G'_{23}=1 \mbox{ (constant) },
\qquad h''=G'_{31}=h,
\]
we get $G' \in \mathcal{F}_3(n)$, and $D(F,G') \leq 2(1-p_2)$.
Also, it is clear that like in Case~1,
\begin{equation}\label{Eq3.5}
D_2(F) \leq D(f,const) = p_1.
\end{equation}

\medskip

\noindent This time we use a slightly different modification of Kalai's
formula. Specifically, we interchange the roles of $g$ and $h$ in
Equations~(\ref{Eq:Modification1.1}),~(\ref{Eq:Modification1.2}),
and~(\ref{Eq:Modification1.3}), and get the following modification
of Equation~(\ref{Eq:Modification1.4}):
\begin{equation}
P(F)= \langle T_{1/3} f' , h \rangle + \langle T_{1/3} (1-g),
\bar{h} \rangle - \langle T_{1/3} f' , 1-g \rangle.
\label{Eq:Modification2.1}
\end{equation}
We subdivide this case into several sub-cases.

\subsubsection{Case 2a: $(1-p_2) \leq p_1^{0.45412}$. }

By Proposition~\ref{Prop:Upper-Bound1}, we get:
\begin{equation}
\langle T_{1/3} f' , 1-g \rangle \leq p_1^{0.9} (1-p_2)^{0.5} \leq
p_1^{0.9} p_1^{0.22706} = p_1^{1.12706}.
\label{Eq:Modification2.2}
\end{equation}
On the other hand, by Theorem~\ref{Thm:Inverse-Beckner1}, we have
\[
\langle T_{1/3} f' , h \rangle + \langle T_{1/3} (1-g), \bar{h}
\rangle \geq RHC(p_3,p_1) + RHC(1-p_3,1-p_2).
\]
Since $p_1 \leq 1-p_2$ and either $p_3$ or $1-p_3$ is not less
than $1/2$, we get
\begin{equation}
\langle T_{1/3} f' , h \rangle + \langle T_{1/3} (1-g), \bar{h}
\rangle \geq RHC(1/2,p_1) \geq 0.5 p_1^{1.12606},
\label{Eq:Modification2.3}
\end{equation}
where the last inequality follows from
Observation~\ref{Obs:Inv-Beckner} since $p_1 \leq 2^{-500000}$.
Combining Inequalities~(\ref{Eq:Modification2.2})
and~(\ref{Eq:Modification2.3}) we get
\[
\langle T_{1/3} f' , 1-g \rangle \leq p_1^{1.12706} =
(2p_1^{0.001}) (0.5 p_1^{1.12606}) \leq 0.5 (\langle T_{1/3} f' ,
h \rangle + \langle T_{1/3} (1-g), \bar{h} \rangle).
\]
Finally,
\begin{align}
\begin{split}
P(F) &= \langle T_{1/3} f' , h \rangle + \langle T_{1/3} (1-g),
\bar{h} \rangle - \langle T_{1/3} f' , 1-g \rangle \\
&\geq 0.5 (\langle T_{1/3} f' , h \rangle + \langle T_{1/3} (1-g), \bar{h}
\rangle) \geq 0.5 \Big(RHC(p_3,p_1) + RHC(1-p_3,1-p_2) \Big) \\
&\geq 0.5 \max \Big( RHC(1-p_2,1-p_2), RHC(1/2,p_1) \Big),
\end{split}
\end{align}
where the last inequality holds since $1-p_3 \geq 1-p_2$. The assertion of the
lemma follows now from Inequalities~(\ref{Eq3.4}) and~(\ref{Eq3.5}).

\subsubsection{Case 2b: $(1-p_2) > p_1^{0.45412}$ and
$\bar{p}_3 \geq p_1^{0.2002}$. }

The upper bound in this case is the same as in Case~2a:
\begin{equation}
\langle T_{1/3} f' , 1-g \rangle \leq p_1^{0.9} (1-p_2)^{0.5}.
\label{Eq:Modification2.4}
\end{equation}
For the lower bound, we use the reverse hypercontractive
inequality for the term $\langle T_{1/3} (1-g), \bar{h} \rangle$,
and get
\begin{equation}
\langle T_{1/3} (1-g), \bar{h} \rangle \geq RHC(1-p_3,1-p_2) \geq
(1-p_3)(1-p_2)^2 \geq p_1^{0.2002} (1-p_2)^2,
\label{Eq:Modification2.5}
\end{equation}
where the second inequality follows from
Observation~\ref{Obs:Inv-Beckner}, and the third inequality
follows from the assumption $\bar{p}_3 \geq p_1^{0.2002}$.
Combination of Inequality~(\ref{Eq:Modification2.4}) with
Inequality~(\ref{Eq:Modification2.5}) yields:
\begin{align*}
\langle T_{1/3} f' , 1-g \rangle &\leq p_1^{0.9} p_2^{0.5} = \Big(
p_1^{0.6998} p_2^{-1.5} \Big) \Big(p_1^{0.2002} (1-p_2)^2 \Big) \\
&\leq p_1^{0.01862} \langle T_{1/3} (1-g), \bar{h} \rangle \leq 0.5
\langle T_{1/3} (1-g), \bar{h} \rangle,
\end{align*}
where the second to last inequality follows from the assumption
$1-p_2 \geq p_1^{0.45412}$, and the last inequality follows since
$p_1 \leq 2^{-500000}$. Finally, if $p_3 \geq 1/2$ then $\langle
T_{1/3} f' , h \rangle \geq RHC(1/2,p_1)$, and otherwise,
$\langle T_{1/3} (1-g), \bar{h} \rangle \geq RHC(1/2,p_1)$. In both
cases,
\begin{align}
\begin{split}
P(F) &= \langle T_{1/3} f' , h \rangle + (\langle T_{1/3} (1-g),
\bar{h} \rangle - \langle T_{1/3} f' , 1-g \rangle) \\
&\geq \langle T_{1/3} f' , h \rangle + 0.5 \langle T_{1/3} (1-g),
\bar{h} \rangle \\
&\geq 0.5 \max \Big( RHC(1-p_2,1-p_2), RHC(1/2,p_1) \Big),
\end{split}
\end{align}
and the assertion follows.

\subsubsection{Case 2c: $p_3 \leq p_1^{0.2002}$.}

In this case we use another modification of Kalai's
formula~(\ref{Eq3.1}), resulting from the following
modification of Equation~(\ref{Eq:Modification2.1}):
\begin{align}\label{Eq:Modification2.6}
\begin{split}
P(F) &= \langle T_{1/3} f' , h \rangle + \langle T_{1/3} (1-g),
\bar{h} \rangle - \langle T_{1/3} f' , 1-g \rangle \\
&= \langle T_{1/3} f' , h \rangle + (\langle T_{1/3} (1-g), 1
\rangle - \langle T_{1/3} (1-g), 1-\bar{h} \rangle) - \langle
T_{1/3} f' , 1-g \rangle \\
&= (1-p_2) + \langle T_{1/3} f' , h \rangle - \langle T_{1/3}
(1-g), 1-\bar{h} \rangle - \langle T_{1/3} f' , 1-g \rangle.
\end{split}
\end{align}
By Proposition~\ref{Prop:Upper-Bound1},
\[
\langle T_{1/3} (1-g) , 1-\bar{h} \rangle \leq (1-p_2)^{0.9}
p_3^{0.5} \leq (1-p_2)^{0.9} p_1^{0.1001} \leq (1-p_2)^{1.0001}.
\]
Similarly,
\[
\langle T_{1/3} f' , 1-g \rangle \leq p_1^{0.9} (1-p_2)^{0.5} \leq
(1-p_2)^{1.4}.
\]
Hence,
\[
\langle T_{1/3} (1-g), 1-\bar{h} \rangle + \langle T_{1/3} f' ,
1-g \rangle \leq (1-p_2)^{1.0001} + (1-p_2)^{1.4} \leq
2(1-p_2)^{1.0001} \leq 0.5 (1-p_2),
\]
where the last inequality follows since
\[
(1-p_2)^{0.0001} \leq p_3^{0.0001} \leq p_1^{0.2002 \cdot 0.0001}
<1/2.
\]
Finally, by Equation~(\ref{Eq:Modification2.6}),
\begin{align}
\begin{split}
P(F) &\geq (1-p_2) - \langle T_{1/3} (1-g), 1-\bar{h} \rangle - \langle
T_{1/3} f' , 1-g \rangle \\
&\geq 0.5(1-p_2) \geq 0.5 \max \Big( RHC(1-p_2,1-p_2), RHC(1/2,p_1) \Big),
\end{split}
\end{align}
as asserted.

\subsubsection{Case 2d: $1-p_3 \leq p_1^{0.2002}$}

We use yet another modification of Equation~(\ref{Eq:Modification2.1}):
\begin{align}\label{Eq:Modification2.7}
\begin{split}
P(F) &= \langle T_{1/3} f' , h \rangle + \langle T_{1/3} (1-g),
\bar{h} \rangle - \langle T_{1/3} f' , 1-g \rangle \\
&= (\langle T_{1/3} f' , 1 \rangle - \langle T_{1/3} f' , 1-h
\rangle ) + \langle T_{1/3} (1-g), \bar{h} \rangle - \langle
T_{1/3} f' , 1-g \rangle \\
&= p_1 - \langle T_{1/3} f' , 1-h \rangle + \langle T_{1/3} (1-g),
\bar{h} \rangle - \langle T_{1/3} f' , 1-g \rangle.
\end{split}
\end{align}
Similarly to Case~2c, we have
\[
\langle T_{1/3} f' , 1-h \rangle \leq p_1^{0.9} (1-p_3)^{0.5} \leq
p_1^{1.0001},
\]
and $\langle T_{1/3} f' , 1-g \rangle \leq p_1^{1.4}$, and thus,
\[
\langle T_{1/3} f' , 1-h \rangle + \langle T_{1/3} f' , 1-g
\rangle \leq p_1^{1.0001} + p_1^{1.4} \leq 0.5 p_1.
\]
Therefore,
\begin{align}
\begin{split}
P(F) &= p_1 - \langle T_{1/3} f' , 1-h \rangle + \langle
T_{1/3} (1-g), \bar{h} \rangle - \langle T_{1/3} f' , 1-g \rangle \\
&\geq 0.5p_1 + \langle T_{1/3} (1-g), \bar{h} \rangle  \geq 0.5
\max \Big( RHC(1-p_2,1-p_2), RHC(1/2,p_1) \Big).
\end{split}
\end{align}
This completes the proof of Lemma~\ref{Lemma:Main}.
\end{proof}

\section{Proof of Theorems~\ref{Thm:Main1} and~\ref{Thm:Main2}}
\label{sec:Proof}

In this section we present the proofs of Theorems~\ref{Thm:Main1}
and~\ref{Thm:Main2}. The proofs are based on
Lemma~\ref{Lemma:Main}, but also rely heavily on several
components of Mossel's proof of his quantitative version of
Arrow's theorem cited in
Section~\ref{sec:sub:preliminaries-mossel}. The general structure
of both proofs is as follows:
\begin{enumerate}
\item We consider first GSWFs on three alternatives, and examine
several cases:
\begin{enumerate}
\item If $D_1(F)$ (resp., $D_2(F)$) is greater than a fixed
constant, we deduce the assertion from Theorem~\ref{Thm:Mossel1}.

\item If $F$ is close to a GSWF that always ranks one of the
candidates at the top/bottom (resp., if at least one of the
Boolean choice functions of $F$ is close to a constant function),
we deduce the assertion from Lemma~\ref{Lemma:Main}.

\item If $F$ (resp., one of the Boolean choice functions of $F$)
is close to a dictatorship of the $i$'th voter, we split $F$ into
six GSWFs $\{F^{\sigma}\}_{\sigma \in S_3}$ according to the
preferences of the $i$'th voter. We further subdivide this case
into two cases:
\begin{itemize}
\item If for all $\sigma \in S_3$, $D_1(F^{\sigma})$ (resp.,
$D'_2(F^{\sigma})$) is small, we get a contradiction (resp., show
directly that $P(F)$ cannot be small).

\item If there exists $\sigma_0 \in S_3$ such that
$D_1(F^{\sigma_0})$ (resp., $D'_2(F^{\sigma_0})$) is not small, we
deduce the assertion by applying Lemma~\ref{Lemma:Main} to the
GSWF $F^{\sigma_0}$.
\end{itemize}
\end{enumerate}

\item We leverage the result to GSWFs on $k$ alternatives, for all
$k \geq 3$. In the proof of Theorem~\ref{Thm:Main1} this requires
the reduction technique of Theorem~\ref{Thm:Leveraging}, and in
the proof of Theorem~\ref{Thm:Main2}, the generalization is
immediate.
\end{enumerate}

\noindent Since the proofs differ in many of the details, we
present them separately. Throughout this section, we use the
notations defined at the beginning of Section~\ref{sec:Lemma}.

\subsection{Proof of Theorem~\ref{Thm:Main1}}

\begin{theorem}\label{Thm4.1}
There exists an absolute constant $C$ such that for any GSWF $F$
on three alternatives that satisfies the IIA condition, if the
probability of non-transitive outcome in $F$ is at most
\[
\delta(\epsilon)= \min(C,\frac{1}{50000} \cdot \epsilon^3),
\]
then $D_1(F) \leq \epsilon$.
\end{theorem}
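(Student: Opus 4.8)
The goal is to prove Theorem~\ref{Thm4.1}: for a GSWF $F$ on three alternatives satisfying IIA, if $P(F) \le \min(C, \tfrac{1}{50000}\epsilon^3)$ then $D_1(F) \le \epsilon$. We argue by contrapositive: assuming $D_1(F) > \epsilon$, we produce a lower bound on $P(F)$ of the form $c\cdot \epsilon^3$ (together with a constant lower bound that handles the case where $\epsilon$ is not small). The strategy follows the three-case breakdown outlined in the section introduction, with Lemma~\ref{Lemma:Main} doing the heavy lifting.

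First I would dispose of the ``easy'' case: if $D_1(F)$ exceeds a fixed absolute constant $c_0$, then by Theorem~\ref{Thm:Mossel1} (Mossel's quantitative Arrow theorem, used as a black box), $P(F) \ge \exp(-C'/c_0^{21})$, which is an absolute positive constant; choosing $C$ below this constant makes the hypothesis $P(F)\le C$ fail, so the statement holds vacuously. Hence from now on assume $D_1(F) \le c_0$ but $D_1(F) > \epsilon$, so in particular $\epsilon < c_0$ is small. Next, split on the structure of $F$ near the transitive family $\mathcal{F}_3(n)$. By Theorem~\ref{Thm:Mossel3}, every $G \in \mathcal{F}_3(n)$ either always ranks some alternative at the top or bottom, or restricts to a dictatorship (or its reverse) on all three alternatives.

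\medskip

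\noindent\textbf{Case (a): $F$ is close to a GSWF ranking some alternative at the top or bottom.} Then by relabeling we are in a situation where one of $p_1, 1-p_1$ (i.e.\ $\min(p_1,1-p_1)$, after the w.l.o.g.\ reductions of Lemma~\ref{Lemma:Main}) is small — more precisely, $D'_2(F)$ is small — so Lemma~\ref{Lemma:Main} applies directly. Since $RHC(D_1(F)/2, D_1(F)/2) = (D_1(F)/2)^3 \ge (\epsilon/2)^3$, the lemma gives $P(F) \ge \tfrac{1}{10}(\epsilon/2)^3 = \tfrac{1}{80}\epsilon^3 \ge \tfrac{1}{50000}\epsilon^3$, as required. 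The subtlety is ensuring that ``$F$ close to ranking an alternative extremally'' really forces $D'_2(F) \le 2^{-500000}$; this is where one uses that $D_1(F) \le c_0$ is small — if all three choice functions were bounded away from constants, then being close to any $G \in \mathcal{F}_3(n)$ of top/bottom type would force $D_1(F)$ large, contradiction. So $c_0$ must be chosen $\le 2^{-500000}$ (or a suitable small multiple), which is fine since it only affects the absolute constant $C$.

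\medskip

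\noindent\textbf{Case (b): $F$ is close to a dictatorship (or reverse-dictatorship) of voter $i$.} This is the main obstacle, and it is handled by the conditioning argument sketched in the section introduction. Fix the pivotal voter $i$ and split $F$ into the six sub-GSWFs $\{F^\sigma\}_{\sigma \in S_3}$, obtained by conditioning on voter $i$'s preference order $\sigma$; each $F^\sigma$ is a GSWF on the remaining $n-1$ voters, and $P(F) = \tfrac{1}{6}\sum_\sigma P(F^\sigma)$ (the noise/Kalai formula decomposes, since conditioning on one coordinate is harmless). If every $F^\sigma$ has $D'_2(F^\sigma)$ small, one shows the only way this is consistent with $F$ being dictatorship-close is that $F$ is in fact very close to the dictator of voter $i$, contradicting $D_1(F) > \epsilon$ (here one quantifies ``$F^\sigma$ has all choice functions near-constant for all $\sigma$'' $\Rightarrow$ ``$F$ near a dictatorship''). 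Otherwise, some $\sigma_0$ has $D'_2(F^{\sigma_0})$ bounded below, and then — because the total distance $D_1(F) > \epsilon$ must be ``carried'' by the sub-GSWFs — at least one sub-GSWF has $D_1(F^{\sigma_0}) \gtrsim \epsilon$ while also having $D'_2(F^{\sigma_0})$ small enough to invoke Lemma~\ref{Lemma:Main}; applying the lemma to $F^{\sigma_0}$ yields $P(F^{\sigma_0}) \ge \tfrac{1}{10}RHC(D_1(F^{\sigma_0})/2, D_1(F^{\sigma_0})/2) \gtrsim \epsilon^3$, and hence $P(F) \ge \tfrac16 P(F^{\sigma_0}) \gtrsim \epsilon^3$. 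Tracking the constants through these reductions (the factor $\tfrac16$, the $(\cdot/2)^3$, the bookkeeping on how $D_1$ distributes across the six pieces) is what pins down the constant $\tfrac{1}{50000}$. The hard part will be making the ``$D_1(F)>\epsilon$ forces some $F^{\sigma_0}$ with $D_1(F^{\sigma_0})$ comparable to $\epsilon$ \emph{and} $D'_2(F^{\sigma_0})$ small'' step precise — i.e.\ simultaneously controlling closeness to the transitive family and non-closeness to constants on the conditioned functions — and verifying the dichotomy is exhaustive given the characterization in Theorem~\ref{Thm:Mossel3}.
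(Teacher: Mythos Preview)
Your overall scaffolding matches the paper's: contrapositive, Mossel's theorem for $D_1(F)$ above an absolute threshold, then split according to whether the nearest $G\in\mathcal F_3(n)$ ranks an alternative extremally or is a dictatorship. Case~(a) is essentially right. The gap is in Case~(b), where your dichotomy is on the wrong quantity and the two sub-cases don't do what you claim.

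Concretely: once $G$ is a dictatorship of voter $i$ and $D_1(F)<c_0$, the restrictions $G^\sigma$ are \emph{constant} GSWFs, and $\Pr[F^\sigma\neq G^\sigma]\le 6D_1(F)$ for every $\sigma$ (averaging). Hence each choice function of $F^\sigma$ is within $6D_1(F)$ of a constant, so $D'_2(F^\sigma)\le 6D_1(F)$ \emph{automatically for all} $\sigma$. Your ``otherwise some $D'_2(F^{\sigma_0})$ is bounded below'' branch therefore never occurs, and your first branch (``all $D'_2(F^\sigma)$ small $\Rightarrow$ $F$ very close to the dictator, contradicting $D_1(F)>\epsilon$'') does not follow: smallness of $D'_2(F^\sigma)$ only recovers that $F$ is within $O(D_1(F))$ of the dictator, which is the hypothesis, not a contradiction. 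The correct dichotomy (and this is what the paper does) is on $D_1(F^\sigma)$: either all $D_1(F^\sigma)\le D_1(F)/4$, or some $D_1(F^{\sigma_0})>D_1(F)/4$. In the latter case Lemma~\ref{Lemma:Main} applies to $F^{\sigma_0}$ because $D'_2(F^{\sigma_0})\le 6D_1(F)<2^{-500000}$ is automatic, giving $P(F)\ge\tfrac16\cdot\tfrac{1}{10}(D_1(F)/8)^3$. In the former case one does \emph{not} get a contradiction by averaging (since $D_1(F^\sigma)\le\Pr[F^\sigma\neq G^\sigma]$, not the reverse); instead one must explicitly build a new $G'\in\mathcal F_3(n)$ by rounding each $F^\sigma$ to its nearest element of $\mathcal F_3(n-1)$ (which is of top/bottom type, not a dictatorship, since each choice function of $F^\sigma$ is near constant), check that the pieces glue to a transitive $G'$, and verify $\Pr[F\neq G']\le 3\cdot D_1(F)/4<D_1(F)$. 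That construction is the missing ingredient in your plan.
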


\begin{proof}
It is clearly sufficient to prove that for any $\epsilon>0$, if
$D_1(F)=\epsilon$, then $P(F) \geq \min(C,\frac{1}{50000} \cdot
\epsilon^3)$, for a universal constant $C$. We shall prove this
for
\begin{equation}\label{Eq:Constant}
C=\exp \left(-\frac{C'}{\left(2^{-500003}\right)^{21}} \right),
\end{equation}
where $C'$ is the constant in Mossel's Theorem~\ref{Thm:Mossel1}.

\medskip

\noindent Let $F$ be a GSWF on three alternatives satisfying the
IIA conditions, and denote the choice functions of $F$ by $f,g,$
and $h$, as in the proof of Lemma~\ref{Lemma:Main}. If $D_1(F)
\geq 2^{-500003}$, then by Theorem~\ref{Thm:Mossel1}, $P(F) \geq
C$. Thus, we may assume that $D_1(F) < 2^{-500003}$.

\medskip

\noindent Let $G \in \mathcal{F}_3(n)$ satisfy $\Pr[F \neq G] =
D_1(F)$ (such element exists by the definition of the distance
$D_1(F)$). Denote the Boolean choice functions of $G$ by $f',g'$,
and $h'$. By Theorem~\ref{Thm:Mossel3}, $G$ either always ranks
one alternative at the top/bottom or is a dictatorship. If $G$
always ranks one alternative at the top/bottom, then at least two
of the functions $f',g',$ and $h'$ are constant. Assume w.l.o.g.
that $f'$ and $g'$ are constant. Since
\[
D_1(F)=\Pr[F \neq G] \geq \max \left(\Pr[f \neq f'], \Pr[g \neq
g'], \Pr[h \neq h'] \right),
\]
it follows that either $\mathbb{E}[f] \leq D_1(F)$ or
$\mathbb{E}[f] \geq 1-D_1(F)$, and similarly for $g$. This implies
that $D'_2(F) \leq D_1(F)<2^{-500003}$, and thus we can apply
Lemma~\ref{Lemma:Main} to $F$ and get
\[
P(F) \geq \frac{1}{10} \cdot RHC(D_1(F)/2,D_1(F)/2) \geq
\frac{1}{10} \cdot (D_1(F)/2)^3 > \frac{1}{50000} \cdot D_1(F)^3,
\]
as asserted. Thus, we may assume that $G$ is a dictatorship.

\medskip

\noindent The following part of the proof is similar to the proof
of Theorem~7.1 in~\cite{Mossel-Arrow}. W.l.o.g., we assume that
the output of $G$ is determined by the first voter. We ``split''
the choice functions according to the first voter. Let
\[
f^0(x_2,x_3,\ldots,x_n)=f(0,x_2,x_3,\ldots,x_n), \qquad
f^1(x_2,x_3,\ldots,x_n)=f(1,x_2,x_3,\ldots,x_n),
\]
and similarly for $g$ and $h$. Furthermore, for any profile
$(\sigma_1,\sigma_2,\ldots,\sigma_n) \in S_3^n$, denote
\[
F^{\sigma_1}(\sigma_2,\sigma_3,\ldots,\sigma_n)=
F(\sigma_1,\sigma_2,\sigma_3,\ldots,\sigma_n),
\]
and similarly for $G$. The Boolean choice functions of
$F^{\sigma}$ are $f^{a_1},g^{a_2},$ and $h^{a_3}$, where
$(a_1,a_2,a_3) \in \{0,1\}^3$ represents the preference $\sigma$
of the first voter (note that only six of the eight possible
combinations of $(a_1,a_2,a_3)$ represent elements of $S_3$).
Denote by $\bar{f}^{a_1},\bar{g}^{a_2},\bar{h}^{a_3}$ the choice
functions of $G^{\sigma}$. Since $G$ is a dictatorship of the
first voter, the functions $\bar{f}^{a_1},\bar{g}^{a_2},$ and
$\bar{h}^{a_3}$ are constant. Clearly, we have
\begin{equation}\label{Eq4.0}
D_1(F)=\Pr[F \neq G] = \frac{1}{6} \sum_{\sigma \in S_3}
\Pr[F^{\sigma} \neq G^{\sigma}],
\end{equation}
and thus, for all $\sigma \in S_3$,
\[
\Pr[F^{\sigma} \neq G^{\sigma}] \leq 6D_1(F).
\]
Since
\[
\Pr[F^{\sigma} \neq G^{\sigma}] \geq \max \left(\Pr[f^{a_1} \neq
\bar{f}^{a_1}], \Pr[g^{a_2} \neq \bar{g}^{a_2}], \Pr[h^{a_3} \neq
\bar{h}^{a_3}] \right),
\]
and since $G^{\sigma}$ is constant, this implies that
\begin{equation}\label{Eq4.1}
\mathbb{E}[f^{a_1}] \leq 6D_1(F) \qquad \mbox{or} \qquad
\mathbb{E}[f^{a_1}] \geq 1-6D_1(F),
\end{equation}
and similarly for $g^{a_2}$ and $h^{a_3}$.

\medskip

\noindent The rest of the proof is divided into two cases:
\begin{itemize}
\item \textbf{Case A:} For all $\sigma \in S_3$ we have
$D_1(F^{\sigma}) \leq D_1(F)/4$.

\item \textbf{Case B:} There exists $\sigma_0 \in S_3$ such that
$D_1(F^{\sigma_0}) > D_1(F)/4$.
\end{itemize}

\noindent We first show that Case~A leads to a contradiction by
constructing a GSWF $G' \in \mathcal{F}_3(n)$ such that $\Pr[F
\neq G'] < D_1(F)$. Then we show that in Case~B, the assertion of
the theorem follows by applying Lemma~\ref{Lemma:Main} to the
function $F^{\sigma_0}$.

\medskip

\noindent \textbf{Case A:} Consider a GSWF $G'$ whose choice
functions $f'', g'',$ and $h''$ are defined as follows: For $a_1
\in \{0,1\}$,
\[
f''(a_1,x_2,\ldots,x_n)= \left\lbrace
  \begin{array}{c l}
    1 \mbox{ (constant) }, & \mathbb{E}[f^{a_1}] \geq 1-D_1(F)/4, \\
    0 \mbox{ (constant) }, & \mathbb{E}[f^{a_1}] \leq D_1(F)/4, \\
    f^{a_1}, & \mbox{otherwise},
  \end{array}
\right.
\]
and similarly for $g'$ and $h'$. We claim that the output of $G'$
is always transitive, and thus $G' \in \mathcal{F}_3 (n)$. Indeed,
by assumption, for any $\sigma \in S_3$, there exists
$\bar{G}^{\sigma} \in \mathcal{F}_3 (n-1)$ such that
$\Pr[F^{\sigma} \neq \bar{G}^{\sigma}] \leq D_1(F)/4$. The GSWF
$\bar{G}^{\sigma}$ cannot be a dictatorship since by
Equation~(\ref{Eq4.1}), the choice functions $f^{a_1},g^{a_2},$
and $h^{a_3}$ of $F^{\sigma}$ satisfy
\[
\mathbb{E}[f^{a_1}] \leq 6D_1(F) \qquad \mbox{or} \qquad
\mathbb{E}[f^{a_1}] \geq 1-6D_1(F),
\]
and thus, for any dictatorship $H$,
\[
\Pr[F^{\sigma} \neq H] \geq 1/2-6D_1(F)> 1/2-2^{500000}.
\]
Therefore, $\bar{G}^{\sigma}$ always ranks one alternative at the
top/bottom. Denote the choice functions of $\bar{G}^{\sigma}$ by
$\tilde{f},\tilde{g},$ and $\tilde{h}$, and assume w.l.o.g. that
$\bar{G}^{\sigma}$ always ranks alternative $1$ at the top, and
thus $\tilde{f}=1$ and $\tilde{h}=0$. Since
\[
D_1(F)/4 \geq \Pr[F^{\sigma} \neq \bar{G}^{\sigma}] \geq \max
\left( \Pr[f^{a_1} \neq \tilde{f}],\Pr[g^{a_2} \neq
\tilde{g}],\Pr[h^{a_3} \neq \tilde{h}] \right),
\]
it follows that
\[
\mathbb{E}[f^{a_1}] \geq 1-D_1(F)/4, \qquad \mbox{ and } \qquad
\mathbb{E}[h^{a_3}] \leq D_1(F)/4.
\]
Hence, by the definition of $G'$, its choice functions satisfy
$f''=1$ and $h''=0$, which means that $G'$ always ranks
alternative $1$ at the top, and is thus always transitive.

\medskip

\noindent Therefore, $G' \in \mathcal{F}_3(n)$, and on the other
hand, we have
\[
\Pr[F \neq G'] \leq \Pr[f \neq f''] + \Pr[g \neq g''] + \Pr[h \neq
h''] \leq 3 \cdot D_1(F)/4 < D_1(F),
\]
contradicting the definition of $D_1(F)$.

\bigskip

\noindent \textbf{Case B:} Let $\sigma_0 \in S_3$ be such that
$D_1(F^{\sigma_0}) > D_1(F)/4$. By Equation~(\ref{Eq4.1}), the
choice functions $f^{a_1},g^{a_2},h^{a_3}$ of $F^{\sigma_0}$
satisfy
\[
\mathbb{E}[f^{a_1}] \leq 6D_1(F) \qquad \mbox{or} \qquad
\mathbb{E}[f^{a_1}] \geq 1-6D_1(F),
\]
and thus (in the notation of Lemma~\ref{Lemma:Main}),
$D'_2(F^{\sigma_0}) \leq 6D_1(F)< 2^{-500000}$. Hence, we can
apply Lemma~\ref{Lemma:Main} to the GSWF $G^{\sigma_0}$, and get
\[
P(F^{\sigma_0}) \geq \frac{1}{10} \cdot
RHC(D_1(F^{\sigma_0})/2,D_1(F^{\sigma_0})/2) \geq \frac{1}{10}
(D_1(F)/8)^3 = \frac{1}{5120} \cdot D_1(F)^3.
\]
Finally,
\[
P(F) = \frac{1}{6} \sum_{\sigma \in S_3} P(F^{\sigma}) \geq
\frac{1}{6} \cdot P(F^{\sigma_0}) > \frac{1}{50000} D_1(F)^3.
\]
This completes the proof of the theorem.
\end{proof}

\medskip

\noindent Theorem~\ref{Thm:Main1} follows immediately from
Theorem~\ref{Thm4.1} using Theorem~\ref{Thm:Leveraging} (the
generic reduction lemma of Mossel).

\subsection{Proof of Theorem~\ref{Thm:Main2}}

\begin{theorem}\label{Thm4.2}
There exists an absolute constant $C$ such that for any GSWF $F$
on three alternatives that satisfies the IIA condition, if the
probability of non-transitive outcome in $F$ is at most
\begin{equation}\label{Eq1.1}
\delta(\epsilon)= \min \left(C, \frac{1}{10000} \cdot
\epsilon^{\frac{9(\sqrt{\log_2
(1/\epsilon)}+1/3)^2}{8\log_2(1/\epsilon)}} \right),
\end{equation}
then $D_2(F) \leq \epsilon$.
\end{theorem}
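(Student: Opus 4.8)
The plan is to follow the proof of Theorem~\ref{Thm4.1} almost verbatim, with $D_1$ replaced by $D_2$ and the term $RHC(D_1(F)/2,D_1(F)/2)$ replaced by the term $RHC(D'_2(F),1/2)$ that already appears in Lemma~\ref{Lemma:Main}. Two preliminary observations make this possible. First, a direct computation from Notation~\ref{Not:RHC} gives $RHC(p,1/2)=RHC(1/2,p)=\tfrac12 p^{9(\sqrt{\log_2(1/p)}+1/3)^2/(8\log_2(1/p))}$; this is increasing in $p$ and behaves like $p^{9/8}\cdot 2^{-\Theta(\sqrt{\log(1/p)})}$, so any bound of the form $P(F)\ge c\cdot RHC(D'_2(F),1/2)$ with $D'_2(F)\ge\epsilon$ yields exactly the dependence~(\ref{Eq1.1}). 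Second, by Theorem~\ref{Thm:Mossel3} every $G\in\mathcal F_3(n)$ has at least one choice function in $\mathcal G_2(n)$ (all three if $G$ is a dictatorship, at least two constant otherwise), so $\Pr[F\ne G]\ge\min_{H\in\mathcal G_2(n)}\Pr[F_{ij}\ne H]\ge D_2(F)$ for that pair, whence $D_1(F)\ge D_2(F)$. Writing $\beta(\epsilon)=\frac{9(\sqrt{\log_2(1/\epsilon)}+1/3)^2}{8\log_2(1/\epsilon)}$ for the exponent in~(\ref{Eq1.1}), it suffices (as in Theorem~\ref{Thm4.1}) to show $D_2(F)=\epsilon$ implies $P(F)\ge\min(C,\tfrac1{10000}\epsilon^{\beta(\epsilon)})$ with $C$ as in~(\ref{Eq:Constant}); and if $\epsilon\ge 2^{-500003}$ then $D_1(F)\ge\epsilon\ge 2^{-500003}$ and Theorem~\ref{Thm:Mossel1} gives $P(F)\ge C$. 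So assume $\epsilon<2^{-500003}$.

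\textbf{Case 1: $D'_2(F)\le 2^{-500000}$.} Since $\mathcal G_2(n)$ contains the constants, $D'_2(F)\ge D_2(F)=\epsilon$, so Lemma~\ref{Lemma:Main} and the monotonicity of $RHC(\cdot,1/2)$ give $P(F)\ge\tfrac1{10}RHC(D'_2(F),1/2)\ge\tfrac1{10}RHC(\epsilon,1/2)=\tfrac1{20}\epsilon^{\beta(\epsilon)}$, which is more than enough.

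\textbf{Case 2: $D'_2(F)>2^{-500000}$.} Now all three choice functions of $F$ are $2^{-500000}$-far from every constant, so the pair achieving $D_2(F)$ — say $(1,2)$, with choice function $f$ — must be $\epsilon$-close to a \emph{dictatorship}: $\Pr[f\ne r]=\epsilon$ for some $r(x)=x_i$ or $r(x)=1-x_i$; after relabelling and, if needed, reversing voter~$1$'s preferences (a symmetry of the profile space fixing $P(F)$ and $D_2(F)$), assume $r(x)=x_1$. Split $F$ by the order $\sigma\in S_3$ of voter~$1$, so $P(F)=\tfrac16\sum_\sigma P(F^\sigma)$; from $\Pr[f\ne x_1]=\tfrac12(\mathbb E[f^0]+1-\mathbb E[f^1])=\epsilon$ we get $\overline p(f^0)+\overline p(f^1)=2\epsilon$ (so $\mathbb E[f^0]\le 2\epsilon$, $\mathbb E[f^1]\ge1-2\epsilon$), hence the $(1,2)$-choice function of every $F^\sigma$ is within $2\epsilon<2^{-500000}$ of a constant and $D'_2(F^\sigma)\le 2\epsilon$. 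If some $\sigma_0$ has $D'_2(F^{\sigma_0})>\epsilon/4$, then $\epsilon/4<D'_2(F^{\sigma_0})\le 2\epsilon<2^{-500000}$, Lemma~\ref{Lemma:Main} applies to $F^{\sigma_0}$, and $P(F)\ge\tfrac16 P(F^{\sigma_0})\ge\tfrac1{60}RHC(\epsilon/4,1/2)$, which exceeds $\tfrac1{10000}\epsilon^{\beta(\epsilon)}$ since both exponents tend to $9/8$ and passing from $\epsilon$ to $\epsilon/4$ costs only a bounded factor. It remains to treat the subcase $D'_2(F^\sigma)\le\epsilon/4$ for all $\sigma\in S_3$.

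\textbf{Remaining subcase (the main obstacle).} Here there is no analogue of the Theorem~\ref{Thm4.1} ``glue-and-contradict-minimality-of-$D_1$'' step, because $D_2$ is a per-pair quantity; I would argue directly. Since $\overline p(f^0)+\overline p(f^1)=2\epsilon$, some $a^*$ has $\overline p(f^{a^*})\ge\epsilon$; say $a^*=1$ (the case $a^*=0$ is symmetric), so $\overline p(f^0)\le\epsilon$ and $\overline p(f^1)\in[\epsilon,2\epsilon]$. For each of the three orders $\sigma$ with $1>_\sigma 2$ (whose $(1,2)$-restriction is $f^1$, with $\overline p(f^1)\ge\epsilon>\epsilon/4$), the relation $D'_2(F^\sigma)\le\epsilon/4$ forces one of the $(2,3)$- or $(3,1)$-restrictions of $F^\sigma$ to be $\le\epsilon/4$ from a constant; a short analysis of the three resulting clauses shows that either both restrictions of $g=F_{23}$, or both restrictions of $h=F_{31}$, are within $\tfrac\epsilon4$ of constants — in which case, by $\Pr[\cdot]=\tfrac12(\Pr[\cdot^0]+\Pr[\cdot^1])$, that function is within $\tfrac\epsilon4<\epsilon$ of an element of $\mathcal G_2(n)$ (a constant, or a dictatorship of voter~$1$ on the relevant preference bit), contradicting that $(1,2)$ achieves $D_2(F)=\epsilon$ — or else $g^0,h^0$ are within $\tfrac\epsilon4$ of constants $c_g,c_h$ while $g^1,h^1$ are not (and then the same contradiction bound applied to $F_{23},F_{31}$ forces $\overline p(g^1),\overline p(h^1)\ge 2\epsilon-\tfrac\epsilon4=\tfrac74\epsilon$). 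A check of the three possibilities for $(c_g,c_h)$ finishes: if $(c_g,c_h)=(1,1)$, the three restrictions of $F^{1>3>2}=(f^1,g^0,h^0)$ are pinned near $(1,1,1)$, which is cyclic, so $P(F^{1>3>2})\ge1-\tfrac52\epsilon$ and $P(F)\ge\tfrac1{12}$; in each of the other cases one finds an order $\sigma$ (namely $3{>}2{>}1$ when $c_g=0$, and $2{>}1{>}3$ when $(c_g,c_h)=(1,0)$) for which $F^\sigma$ has one restriction within $\epsilon$ of $0$, one within $\tfrac\epsilon4$ of $0$, and a third ($g^1$ or $h^1$) at distance $\ge\tfrac74\epsilon$ from a constant, so by a union bound $F^\sigma$ outputs $(0,0,0)$ with probability at least $\tfrac74\epsilon-(\epsilon+\tfrac\epsilon4)=\tfrac\epsilon2$, giving $P(F)\ge\tfrac1{12}\epsilon$. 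Since $\tfrac1{12}\epsilon\gg\tfrac1{10000}\epsilon^{\beta(\epsilon)}$ for $\epsilon<2^{-500003}$, the proof is complete. I expect the only real labor to be the bookkeeping — verifying that the threshold $\epsilon/4$ makes the ``$<\epsilon$'' conclusions strict, enumerating the $(c_g,c_h)$ cases and producing the right $\sigma$ in each, and confirming the bounded-factor loss in Case~2 stays below $\tfrac1{10000}$; everything else transcribes the proof of Theorem~\ref{Thm4.1}, with $RHC(p,1/2)\approx p^{9/8}\cdot 2^{-\Theta(\sqrt{\log(1/p)})}$ playing the role of $RHC(p/2,p/2)=(p/2)^3$.
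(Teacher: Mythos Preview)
Your proposal is correct and follows essentially the same approach as the paper: reduce to Mossel's theorem when $D_2(F)$ is above a fixed constant, apply Lemma~\ref{Lemma:Main} directly when some choice function is near a constant, and otherwise split by the dictator's vote and handle Cases~A/B exactly as here. The only organizational difference is in your ``remaining subcase'' (the paper's Case~A): the paper uses all six $\sigma$'s at once to force the clean dichotomy that either $f^0,g^0,h^0$ or $f^1,g^1,h^1$ are all $\epsilon/4$-close to constants, then picks two agreeing in direction and applies a single union bound --- whereas you use only the three $\sigma$'s with $a_1=1$ and then branch on $(c_g,c_h)$; both routes work and give a bound linear in $\epsilon$, which is ample.
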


\begin{proof}
By Equation~(\ref{Eq:RHC'}), it is sufficient to prove that for
any $\epsilon>0$, if $D_2(F)=\epsilon$, then
\[
P(F) \geq \min \left(C,\frac{1}{5000} \cdot RHC(1/2,\epsilon)
\right),
\]
for a universal constant $C$. We shall prove this for
\begin{equation}\label{Eq:Constant2}
C=\exp \left(-\frac{C'}{\left(2^{-500003}\right)^{21}} \right),
\end{equation}
where $C'$ is the constant in Mossel's Theorem~\ref{Thm:Mossel1}.
Let $F$ be a GSWF on three alternatives satisfying the IIA
conditions, and denote the choice functions of $F$ by $f,g,$ and
$h$, as in the proof of Lemma~\ref{Lemma:Main}.

\medskip

\noindent First we consider the case $D_2(F) \geq 2^{-500003}$. We
show that in general, $D_1(F) \geq D_2(F)$, and thus in this case
we have $D_1(F) \geq D_2(F) \geq 2^{-500003}$, which by
Theorem~\ref{Thm:Mossel1} implies that $P(F) \geq C$. Let $G \in
\mathcal{F}_3(n)$ satisfy $\Pr[F \neq G] = D_1(F)$, and denote the
Boolean choice functions of $G$ by $f',g'$, and $h'$. Clearly,
\begin{equation}\label{Eq4.3}
D_1(F)=\Pr[F \neq G] \geq \max \left(\Pr[f \neq f'], \Pr[g \neq
g'], \Pr[h \neq h'] \right).
\end{equation}
By Theorem~\ref{Thm:Mossel3}, $G$ either always ranks one
alternative at the top/bottom or is a dictatorship. In the first
case, at least two of the functions $f',g',$ and $h'$ are
constant, and thus Equation~(\ref{Eq4.3}) implies that at least
two of the functions $f,g,$ and $h$ are at most $D_1(F)$-far from
a constant function. In the latter case, the functions $f',g',$
and $h'$ are dictatorships, and thus Equation~(\ref{Eq4.3})
implies that $f,g,$ and $h$ are at most $D_1(F)$-far from a
dictatorship. Hence, in both cases,
\[
D_2(F)=\min_{1 \leq i<j \leq 3} \min_{G \in \mathcal G_2(n)}
\Pr[F_{ij} \neq G] \leq D_1(F),
\]
as asserted.

\medskip

\noindent Now we consider the case $D_2(F)<2^{-500003}$. Assume
w.l.o.g. that the minimal distance $\min_{G \in \mathcal G_2(n)}
\Pr[F_{ij} \neq G]$ is obtained by the choice function $f$, and
let $\tilde{f} \in \mathcal G_2(n)$ satisfy $\Pr[f \neq \tilde{f}]
= D_2(F)$. If $\tilde{f}$ is a constant function, then in the
notations of Lemma~\ref{Lemma:Main}, this implies that $D'_2(F) =
D_2(F)<2^{-500003}$, and thus we can apply Lemma~\ref{Lemma:Main}
to $F$ and get
\[
P(F) \geq \frac{1}{10} \cdot RHC(1/2,D'_2(F)) > \frac{1}{5000}
\cdot RHC(1/2,D_2(F)),
\]
as asserted. Thus, we may assume that $\tilde{f}$ is a
dictatorship.

\medskip

\noindent Assume w.l.o.g. that $\tilde{f}$ is a dictatorship of
the first voter. Define the functions $F^{\sigma},
f^0,f^1,g^0,g^1,h^0,$ and $h^1$ as in the proof of
Theorem~\ref{Thm4.1}, and let
\[
\tilde{f}^0(x_2,x_3,\ldots,x_n) = \tilde{f}(0,x_2,x_3,\ldots,x_n),
\qquad \mbox{and} \qquad \tilde{f}^1(x_2,x_3,\ldots,x_n) =
\tilde{f}(1,x_2,x_3,\ldots,x_n).
\]
Clearly, we have
\begin{equation}\label{Eq4.4}
D_2(F)=\Pr[f \neq \tilde{f}] = \frac{1}{2} (\Pr[f^0 \neq
\tilde{f}^0] + \Pr[f^1 \neq \tilde{f}^1]),
\end{equation}
and thus, for $a_1 \in \{0,1\}$,
\[
\Pr[f^{a_1} \neq \tilde{f}^{a_1}] \leq 2D_2(F).
\]
Since $\tilde{f}^{0}$ and $\tilde{f}^1$ are constant functions,
this implies that
\begin{equation}\label{Eq4.5}
\mathbb{E}[f^{a_1}] \leq 2D_2(F) \qquad \mbox{or} \qquad
\mathbb{E}[f^{a_1}] \geq 1-2D_2(F).
\end{equation}

\medskip

\noindent The rest of the proof is divided into two cases:
\begin{itemize}
\item \textbf{Case A:} For all $\sigma \in S_3$ we have
$D'_2(F^{\sigma}) \leq D_2(F)/4$.

\item \textbf{Case B:} There exists $\sigma_0 \in S_3$ such that
$D'_2(F^{\sigma_0}) > D_2(F)/4$.
\end{itemize}

\noindent \textbf{Case A:} In this case, for any $\sigma \in S_3$,
at least one of the choice functions of $F^{\sigma}$ is at most
$D_2(F)/4$-far from a constant function. Note that if $f^0$ is at
most $D_2(F)/4$-far from a constant function, then $f^1$ must be
at least $7D_2(f)/4$-far from a constant function, since
otherwise, $f$ is less than $D_2(F)$-far either from a constant
function or from a dictatorship, contradicting the definition of
$D_2(F)$. The same holds also for the pairs $(g^0,g^1)$ and
$(h^0,h^1)$. Thus, the only two possibilities are that either the
functions $f^1,g^1,h^1$ or the functions $f^0,g^0,h^0$ are
simultaneously at most $D_2(F)/4$-far from a constant function.
(For example, if $f^1,g^1,$ and $h^0$ are at most $D_2(F)/4$-far
from a constant function, then $f^0,g^0,$ and $h^1$ are at least
$7D_2(F)/4$-far from a constant function, and thus, for the
preference $\sigma=(0,0,1)$, we have $D'_2(F^{\sigma}) \geq
7D_2(F)/4$, a contradiction. The other possibilities are discarded
in a similar way). Assume w.l.o.g. that $f^1,g^1,$ and $h^1$ are
at most $D_2(F)/4$-far from a constant function. Furthermore,
since amongst the expectations
$\mathbb{E}[f^1],\mathbb{E}[g^1],\mathbb{E}[h^1]$, at least two
are close to one or at least two are close to zero, we can assume
w.l.o.g. that
\[
\Pr[f^1 \neq 1] \leq D_2(F)/4, \qquad \mbox{and} \qquad \Pr[g^1
\neq 1] \leq D_2(F)/4.
\]
Consider the GSWF $F^{\sigma_0}$ for the preference
$\sigma_0=(1,1,0)$. Since $h^0$ is at least $7D_2(F)/4$-far from
the constant zero function, it follows that
\begin{align*}
P(F^{\sigma}) &\geq \Pr_{\mbox{profile } \in (S_3)^{n-1}}
[(f^1,g^1,h^0)(profile)=(1,1,1)] \\
&\geq 7D_2(F)/4 - D_2(F)/4- D_2(F)/4 = 5D_2(F)/4,
\end{align*}
and thus,
\[
P(F) = \frac{1}{6} \sum_{\sigma \in S_3} P(F^{\sigma}) \geq
\frac{1}{6} \cdot P(F^{\sigma_0}) \geq \frac{5}{24} \cdot D_2(F)>
\frac{1}{5000} \cdot RHC(1/2,D_2(F)),
\]
as asserted.

\bigskip

\noindent \textbf{Case B:} Let $\sigma_0 \in S_3$ be such that
$D'_2(F^{\sigma_0}) > D_2(F)/4$. By Equation~(\ref{Eq4.5}), the
choice function $f^{a_1}$ of $F^{\sigma_0}$ satisfies
\[
\mathbb{E}[f^{a_1}] \leq 2D_2(F) \qquad \mbox{or} \qquad
\mathbb{E}[f^{a_1}] \geq 1-2D_2(F),
\]
and thus, $D'_2(F^{\sigma_0}) \leq 2D_2(F)< 2^{-500000}$. Hence,
we can apply Lemma~\ref{Lemma:Main} to the GSWF $G^{\sigma_0}$,
and get
\[
P(F^{\sigma_0}) \geq \frac{1}{10} \cdot
RHC(1/2,D'_2(F^{\sigma_0})) \geq \frac{1}{10} \cdot
RHC(1/2,D_2(F)/4) \geq \frac{1}{640} \cdot RHC(1/2,D_2(F)).
\]
Finally,
\[
P(F) = \frac{1}{6} \sum_{\sigma \in S_3} P(F^{\sigma}) \geq
\frac{1}{6} \cdot P(F^{\sigma_0}) > \frac{1}{5000} \cdot
RHC(1/2,D_2(F)).
\]
This completes the proof of the theorem.
\end{proof}

\medskip

\noindent The generalization to $k$ alternatives for all $k \geq
3$ follows immediately by applying Theorem~\ref{Thm4.2} to any
subset of three alternatives.

\section{Tightness of Results}
\label{sec:Tightness}

In this section we show that for GSWFs on three alternatives, the
assertions of Theorems~\ref{Thm:Main1} and~\ref{Thm:Main2} are
tight up to logarithmic factors. In all our examples below, the
Boolean choice functions $f,g,h$ of the GSWF $F$ are monotone
threshold functions, that is, functions of the form:
\begin{equation}\label{Eq5.0}
(f(x)=1) \Leftrightarrow \left( \sum_{i=1}^n x_i \geq l \right),
\end{equation}
for different values of $l$. We note that in~(\cite{Maj-Stablest},
Theorem~2.9), Mossel et al. showed that amongst {\it neutral}
GSWFs on three alternatives, a GSWF based on the majority rule is
the ``most rational'' in the asymptotic sense (i.e., has the least
probability of non-transitive outcome as the number of voters
tends to infinity). To some extent, our examples generalize this
result to general GSWFs on three alternatives. The examples show
that GSWFs based on monotone threshold Boolean choice functions
are ``close to be the most rational'' amongst GSWFs whose choice
functions have the same expectations, in the sense that their
probability of non-transitive outcome is logarithmic close to the
lower bound. In fact, we conjecture that such GSWFs are indeed the
{\it most} rational amongst GSWFs whose choice functions have the
same expectations. However, such exact result is not known even
for neutral GSWFs.

\medskip

\noindent We use the following proposition of Mossel et
al.~\cite{Mossel-Inverse}, showing that Borell's reverse
Bonami-Beckner inequality is essentially tight for diametrically
opposed Hamming balls. Since we use the proposition only for noise
of rate $\epsilon=1/3$, we state it in this particular case.
\begin{theorem}[~\cite{Mossel-Inverse}, Proposition~3.9]
\label{Thm5.1}
Fix $s,t>0$, and let $f_n,g_n:\{0,1\}^n \rightarrow
\{0,1\}$ be defined by
\[
(f_n(x)=1) \Leftrightarrow \left( \sum_{i=1}^n x_i \leq
\frac{n}{2}-\frac{s}{2}\sqrt{n} \right), \qquad \mbox{and} \qquad
(g_n(x)=1) \Leftrightarrow \left( \sum_{i=1}^n x_i \geq
\frac{n}{2}+\frac{t}{2}\sqrt{n} \right).
\]
Then
\begin{equation}\label{Eq5.1}
\lim_{n \rightarrow \infty} \sum_{S \subset \{1,\ldots,n\}} \left(
\frac{1}{3} \right)^{|S|} \hat f_n(S) \hat g_n(S) \leq
\frac{\sqrt{8/9}}{2 \pi s(s/3+t)} \exp \left(-\frac{1}{2}
\frac{s^2+2st/3+t^2}{8/9} \right).
\end{equation}
\end{theorem}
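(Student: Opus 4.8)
The plan is to reduce the statement to a two‑dimensional Gaussian computation via the Central Limit Theorem, following the standard "invariance/limit" philosophy used throughout this circle of ideas. First I would observe that the noise‑correlation sum on the left is exactly $\langle T_{1/3} f_n, g_n\rangle = \mathbb{E}[f_n(x)\,g_n(y)]$, where $(x,y)$ is a pair of $\{0,1\}^n$‑vectors whose coordinates are $(1/3)$‑correlated: each $(x_i,y_i)$ is an independent copy of the joint distribution in which $y_i$ equals $x_i$ with probability $2/3$ and is resampled uniformly with probability $1/3$. Equivalently, writing $X_n = \sum_i (2x_i-1)/\sqrt n$ and $Y_n = \sum_i (2y_i-1)/\sqrt n$, the events $\{f_n=1\}$ and $\{g_n=1\}$ become $\{X_n \le -s\}$ and $\{Y_n \ge t\}$ (up to the harmless $O(1/\sqrt n)$ rounding in the thresholds, which I would absorb into an $\epsilon$ slack and send to zero at the end). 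So the left side is $\Pr[X_n \le -s,\ Y_n \ge t]$.

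Next I would compute the covariance structure of $(X_n,Y_n)$: each has variance $1$ (up to $o(1)$), and the correlation is $\mathbb{E}[(2x_i-1)(2y_i-1)] = 1/3$, since with probability $2/3$ the bits agree and with probability $1/3$ they are independent. By the bivariate CLT, $(X_n,Y_n)$ converges in distribution to a centered Gaussian pair $(U,V)$ with $\mathrm{Var}(U)=\mathrm{Var}(V)=1$ and $\mathrm{Cov}(U,V)=1/3$; since the limiting law is continuous, $\lim_n \Pr[X_n\le -s, Y_n\ge t] = \Pr[U \le -s,\ V \ge t]$. It remains to bound this Gaussian orthant‑type probability. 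I would write the joint density explicitly — with $\rho = 1/3$, the quadratic form in the exponent is $\frac{1}{2(1-\rho^2)}(u^2 - 2\rho uv + v^2)$ and $1-\rho^2 = 8/9$ — and estimate $\int_{-\infty}^{-s}\int_{t}^{\infty}$ of it. The cleanest route is the classical Gaussian tail trick: on the region $u\le -s$, $v\ge t$ one has the pointwise bound obtained by completing the square, and integrating the resulting exponential (using $\int_{t}^{\infty} e^{-av}\,dv = e^{-at}/a$ and similarly in $u$) produces the prefactor $\frac{1}{2\pi}\cdot\frac{1}{(\text{stuff})}$ together with the exponential $\exp\!\big(-\frac{1}{2}\cdot\frac{s^2 + 2st/3 + t^2}{8/9}\big)$; matching the normalization constant $\frac{1}{2\pi\sqrt{1-\rho^2}} = \frac{1}{2\pi\sqrt{8/9}}$ of the bivariate density against the $1/(s(s/3+t))$ coming from the two one‑dimensional integrations gives precisely the claimed right‑hand side $\frac{\sqrt{8/9}}{2\pi s(s/3+t)}\exp(\cdots)$.

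The main obstacle, and the only place requiring care, is the tail estimate in the last step: one must choose the right "completion of the square" so that the exponent evaluated on the corner $(u,v)=(-s,t)$ of the integration region equals $-\frac12\cdot\frac{s^2+2st/3+t^2}{8/9}$ and the linear terms left over have the correct signs to make the remaining integral a convergent decaying exponential, yielding exactly the factor $\frac{1}{s(s/3+t)}$ rather than something merely comparable. Concretely, after shifting the minimizer of the quadratic form into the feasible region one integrates $\exp(-\alpha(u+s) - \beta(v-t))$ over $u\le -s$, $v\ge t$ with the appropriate $\alpha,\beta>0$ read off from the gradient of the quadratic form at $(-s,t)$; a short computation identifies $\alpha$ and $\beta$ with $s/(8/9)\cdot(\text{const})$ and pins down the stated constant. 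A secondary, more routine point is justifying the interchange of limit and the rounding of the thresholds $\frac n2 \pm \frac{s}{2}\sqrt n$ to non‑integer values, which costs only an $o(1)$ perturbation of $s$ and $t$ and hence does not affect the limit. I expect the Gaussian estimate to be the heart of the matter; everything before it is bookkeeping with the noise operator and the CLT.
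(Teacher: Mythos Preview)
The paper does not prove Theorem~\ref{Thm5.1}: it is quoted verbatim from \cite{Mossel-Inverse} (Proposition~3.9) and used only as a black box in Section~\ref{sec:Tightness}. So there is no ``paper's own proof'' to compare against. That said, your outline is exactly the standard argument behind this proposition: rewrite the noise correlation as $\Pr[X_n\le -s,\,Y_n\ge t]$ for $1/3$-correlated normalized sums, pass to the bivariate Gaussian limit via the CLT, and bound the resulting orthant probability by the Mills-ratio/tangent-plane estimate on the quadratic form. This is correct and is essentially how the result is obtained in \cite{Mossel-Inverse}.

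Two small points to tidy up. First, your verbal description of the noise (``$y_i=x_i$ with probability $2/3$, resampled with probability $1/3$'') does not match the paper's convention for $T_{1/3}$ and would give correlation $2/3$; nevertheless your computed correlation $\rho=1/3$ is the right one, so just fix the wording. Second, the convexity/gradient bound you describe, carried out cleanly, yields the prefactor
\[
\frac{(1-\rho^2)^{3/2}}{2\pi\,(s+\rho t)(t+\rho s)}
=\frac{(8/9)^{3/2}}{2\pi\,(s+t/3)(t+s/3)},
\]
not literally $\frac{\sqrt{8/9}}{2\pi\,s(s/3+t)}$. Since $(8/9)^{3/2}<(8/9)^{1/2}$ and $s+t/3>s$, your bound is strictly smaller, so the stated inequality~(\ref{Eq5.1}) certainly follows; just do not claim you recover the constant ``precisely''.
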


\medskip

\noindent In order to show the tightness of
Theorem~\ref{Thm:Main1}, we fix a constant $\epsilon>0$ and define
the choice functions according to Equation~(\ref{Eq5.0}), choosing
the values of $l$ such that
\[
\mathbb{E}[f]=0, \qquad \mathbb{E}[g]=1-\epsilon, \qquad
\mathbb{E}[h]=1-\epsilon.
\]
It is clear that $D_1(F)=\epsilon$. By
Equation~(\ref{Eq:Modification2.1}),
\[
P(F)=\langle T_{1/3} (1-g), \bar{h} \rangle.
\]
By our construction, the pair of functions $(1-g,\bar{h})$ is of
the form considered in Theorem~\ref{Thm5.1}, with $s=t \approx
\sqrt{2\log(1/\epsilon)}$, and thus by the theorem, for $n$
sufficiently large,
\[
P(F)=\langle T_{1/3} (1-g), \bar{h} \rangle \leq
\frac{\sqrt{8/9}}{2 \pi s(s/3+t)} \exp \left(-\frac{1}{2}
\frac{s^2+2st/3+t^2}{8/9} \right) \approx C \epsilon^3
\log(1/\epsilon).
\]
The lower bound asserted by Theorem~\ref{Thm:Main1} is $P(F) \geq
C' \cdot \epsilon^3$, and thus the example shows the tightness of
the assertion up to logarithmic factors.

\medskip

\noindent The tightness of Theorem~\ref{Thm:Main2} is shown
similarly, with choice functions chosen such that
\[
\mathbb{E}[f]=\epsilon, \qquad \mathbb{E}[g]=1-\epsilon, \qquad
\mathbb{E}[h]=1/2.
\]
It is clear that $D_2(F)=\epsilon$, and by
Equation~(\ref{Eq:Modification2.1}),
\[
P(F) \leq \langle T_{1/3} f', h \rangle + \langle T_{1/3} (1-g),
\bar{h} \rangle.
\]
The pairs $(f',h)$ and $(1-g,\bar{h})$ are both of the form
considered in Theorem~\ref{Thm5.1}, and application of the theorem
to both of them yields tightness up to a logarithmic factor, like
in the previous case.

\medskip

\noindent Finally, we note that while the examples above deal with
GSWFs whose choice functions have constant expectation, it also
makes sense to consider choice functions whose expectation tends
to zero, as $n$ (the number of voters) tends to infinity. In
particular, one may ask what is the {\it least possible}
probability of non-transitive outcome, as function of $n$, for
GSWFs with $D_1(F)>0$ or $D_2(F)>0$.  It appears that the question
is of interest mainly for $D_2(F)$, as for $D_1(F)$, one can
easily check that the minimal possible probability of $6^{-n}$ is
obtained by a GSWF whose choice functions are chosen according to
Equation~(\ref{Eq5.0}), such that
\[
\mathbb{E}[f]=0, \qquad \mathbb{E}[g]=1-2^{-n}, \qquad
\mathbb{E}[h]=1-2^{-n}.
\]
For $D_2(F)$, it was shown in~\cite{Keller-Choice} that for a GSWF
whose choice functions are chosen according to
Equation~(\ref{Eq5.0}), such that
\[
\mathbb{E}[f]=2^{-n}, \qquad \mathbb{E}[g]=1-2^{-n}, \qquad
\mathbb{E}[h]=1/2,
\]
we have $P(F) \leq 0.471^n$. Furthermore, it was conjectured that
this is the most rational GSWF on three alternatives that
satisfies the assumptions of Arrow's theorem (and in particular,
the minimal possible probability $6^{-n}$ is not obtained). Our
results show that this function is at least ``close'' to be the
most rational, as by Theorem~\ref{Thm:Main2}, for any GSWF $F$
such that $D_2(F)>0$, we have
\[
P(F) \geq C \cdot RHC(1/2,2^{-n}) \approx C \cdot 0.458^n.
\]

\section{Questions for Further Research}
\label{sec:Open-Questions}

We conclude this paper with several open problems related to our
results.

\begin{itemize}

\item Our main lemma (Lemma~\ref{Lemma:Main}) gives an essentially
tight lower bound on the probability of non-transitive outcome for
GSWFs in which at least one of the Boolean choice functions is
``close'' to a constant function. In the case where the distance
from constant functions is greater than a fixed constant, our
technique fails, and we use Mossel's theorem~\cite{Mossel-Arrow}
instead. As a result, the constant multiplicative factor in the
assertions of Theorems~\ref{Thm:Main1} and~\ref{Thm:Main2} is
extremely small, and clearly non-optimal. It will be interesting
to find a ``direct'' proof also for GSWFs whose Boolean choice
functions are ``far'' from constant functions, thus removing the
reliance of the proof on the non-linear invariance principle (used
in Mossel's argument) that seems ``unnatural'' in our context, and
improving the constant factor.

\item While the results of Kalai~\cite{Kalai-Choice} and
Mossel~\cite{Mossel-Arrow} hold also for more general
distributions of the individual preferences called ``even product
distributions'' or ``symmetric distributions''
(see~\cite{Keller-Choice,Mossel-Arrow}), our proof does not extend
directly to such distributions. The reason is that for highly
biased distributions of the preferences, the lower bound obtained
by Borell's reverse Bonami-Beckner inequality is weaker, and
cannot ``beat'' the upper bound obtained by the Bonami-Beckner
inequality. Thus, obtaining a tight quantitative version of
Arrow's theorem for general even product distributions of the
preferences is an interesting open problem.

\item We believe that GSWFs whose Boolean choice functions are
monotone threshold functions are the most rational amongst GSWFs
whose choice functions have the same expectations, not only in the
asymptotic sense, but also for any particular (large enough) $n$.
However, this conjecture seems quite challenging, as it includes
the Majority is Stablest conjecture (whose proof by Mossel et
al.~\cite{Maj-Stablest} holds only in the limit as $n \rightarrow
\infty$).

\item Another direction of research is using our techniques to
obtain quantitative versions of other theorems in social choice
theory. In~\cite{FKN2}, Friedgut et al. presented a quantitative
version of the Gibbard-Satterthwaite
theorem~\cite{Gibbard,Satterthwaite} for neutral GSWFs on three
alternatives. Recently, Isaksson et al.~\cite{IKM} generalized the
result of~\cite{FKN2} to neutral GSWFs on $k$ alternatives, for
all $k \geq 4$. One of the main ingredients in the proof
of~\cite{FKN2} is Kalai's quantitative Arrow theorem for neutral
GSWFs. It seems interesting to find out whether our quantitative
version of Arrow's theorem can lead to a quantitative
Gibbard-Satterthwaite theorem for general GSWFs (without the
neutrality assumption).

\item Finally, our results (as well as the previous results of
Kalai~\cite{Kalai-Choice} and Mossel~\cite{Mossel-Arrow}) apply
only to GSWFs that satisfy the IIA condition, since such GSWFs can
be represented by their Boolean choice functions, which allows to
use the tools of discrete harmonic analysis. It will be very
interesting to find a quantitative version of Arrow's theorem that
will not assume the IIA condition, but rather will relate the
probability of non-transitive outcome to the distance of the GSWF
from satisfying IIA.

\end{itemize}

\section{Acknowledgements}

It is a pleasure to thank Gil Kalai and Elchanan Mossel for
numerous fruitful discussions that motivated our work.

\end{document}